\newcommand\comment[1]{}
\newtheorem{theorem}{Theorem}[section]
\newtheorem{lemma}[theorem]{Lemma}
\newcommand\F{{\mathcal F}}
\newcommand\fw{\mathop{\mathrm{fw}}}
\newcommand\nsfw{\mathop{\mathrm{nsfw}}}
\newcommand\leqm{\leq_{\mathrm{m}}}     % minor relation
\newcommand\Int{\mathop{\mathrm{Int}}}
\newcommand\bnd{\partial}     % boundary
\begin{document}

\title{Homological face-width condition forcing $K_6$-minors in graphs on surfaces}

%\author{Roi Krakovski \and Bojan Mohar}

\author{
   Roi Krakovski\thanks{Supported in part by postdoctoral support at the Simon Fraser University.}\\[1mm]
   {Department of Mathematics}\\{Simon Fraser University}\\{Burnaby, B.C. V5A 1S6}
\and
   Bojan Mohar\thanks{Supported in part by an NSERC Discovery Grant (Canada), by the Canada Research Chair program, and by the Research Grant P1--0297 of ARRS (Slovenia).}~~\thanks{On leave from: IMFM \& FMF, Department of Mathematics, University of Ljubljana, Ljubljana, Slovenia.}\\[1mm]
   %~~\thanks{Email: {\tt mohar@sfu.ca}}
   {Department of Mathematics}\\{Simon Fraser University}\\{Burnaby, B.C. V5A 1S6}
   }

\maketitle

\begin{abstract}
It is proved that every graph embedded on a (non-spherical) surface with non-separating face-width at least $7$ contains a minor isomorphic to $K_6$.
It is also shown that face-width four yields the same conclusion for graphs on the projective plane.
\end{abstract}

\section{Introduction}

A \emph{surface} is a connected compact 2-manifold. Unless explicitly stated otherwise, surfaces will be assumed to be non-simply connected and have no boundary.
If there is a nonempty boundary, then we speak of a \emph{bordered surface} and every component of the boundary is called a \emph{cuff}. A simple closed curve $\gamma$ on a surface $\Sigma$ is said to be \emph{surface separating} or \emph{zero-homologous} if cutting $\Sigma$ along $\gamma$ results in a disconnected (bordered) surface. Two disjoint simple closed curves are said to be \emph{homologous} if they are either both zero-homologous, or none of them is zero-homologous, but cutting the surface along both of these curves disconnects the surface.

Let $G$ be a graph embedded on a surface $\Sigma$. We regard $G$ as a subset of $\Sigma$ (that is, we identify $G$ with its embedding on $\Sigma$). The \emph{face-width} of $G$, denoted by
$\fw(G)$, is the maximum number $k$ so that every non-contractible simple closed curve in
$\Sigma$ intersects $G$ in at least $k$ points.
The homology version, the \emph{non-separating face-width} of $G$, denoted by $\nsfw(G)$, is the maximum number $k$ so that every surface non-separating
simple closed curve in $\Sigma$ intersects $G$ in at least $k$ points.
We refer to \cite{mohar} for additional terminology involving graphs embedded in surfaces.

A graph $H$ is a \emph{minor} of a graph $G$, in symbols $H \leqm G$, if $H$ can be obtained from a subgraph of $G$ by a series of contractions of edges.

The theory of graph minors (Robertson and Seymour \cite{RS13}) shows that for every surface $\Sigma$ there exists a constant $c_{\Sigma}$ (depending only on $\Sigma$) such that if $G$ embeds in $\Sigma$ with face-width at least $c_\Sigma$, then $G$ contains $K_6$ as a minor.
We are interested in finding the best possible value for $c_{\Sigma}$. If $G$ is an apex graph, then $G$ does not contain $K_6$ as a minor. It is known that there are apex graphs that can be embedded on non-spherical surfaces with face-width at least three, see \cite{Mo97}. Hence, there are surfaces $\Sigma$ with $c_\Sigma \geq 4$. In fact, there are examples showing that $c_\Sigma\ge4$ for every surface $\Sigma$. We first show that $c_\Sigma=4$ in the special case when $\Sigma$ is the projective plane.

\begin{theorem}
\label{thm:pp}
Let $G$ be a graph embedded on the projective plane. If\/ $\fw(G) \geq 4$, then $K_6 \leqm G$.
\end{theorem}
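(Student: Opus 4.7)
\emph{Setup.} I would choose a non-contractible simple closed curve $\gamma$ realizing the face-width of $G$, meeting $G$ in exactly four vertices $v_1,v_2,v_3,v_4$ (after subdividing edges if necessary so that all intersections lie at vertices), listed in cyclic order along $\gamma$. Cutting $\Sigma$ along $\gamma$ yields a closed disk $D$ in which $G$ becomes a plane graph $G'$, whose boundary cycle visits the vertices in the cyclic order $v_1,v_2,v_3,v_4,v_1',v_2',v_3',v_4'$, with the identifications $v_i=v_i'$ performed upon regluing.

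\emph{Menger machinery.} The face-width hypothesis translates, inside $D$, via Menger's theorem and planar duality, as follows: for any two antipodal boundary points $p,-p$ avoiding $G$, the two arcs of $\bnd D$ between $p$ and $-p$ are joined by four pairwise vertex-disjoint paths in $G'$, matched in the unique non-crossing way dictated by planarity. Ranging $p$ through the four open arcs of $\gamma\setminus\{v_1,\dots,v_4\}$ produces four such families of four paths each. Joining pairs of these paths across $\gamma$ yields cycles in the projective plane realizing the adjacencies $v_iv_{i\pm 1}$ modulo $4$; the remaining ``diagonal'' adjacencies $v_1v_3$ and $v_2v_4$ should follow from a variant Menger argument, possibly after perturbing $\gamma$ slightly or re-routing paths inside $D$, so that $\{v_1,v_2,v_3,v_4\}$ forms a $K_4$-minor in $G$.

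\emph{Assembly of $K_6$ and main obstacle.} Four branch sets of the $K_6$-minor would be formed by contractions of the $v_i$ together with short attached paths inside $D$, with pairwise adjacency supplied by the cycles of the previous step. The remaining two branch sets come from vertex sets lying in opposite halves of $D$, each connected to every $v_i$ by a path from one of the Menger families, and to each other by a transverse path whose existence is, in turn, forced by $\fw(G)\ge 4$. The main obstacle is verifying all fifteen adjacencies simultaneously in the presence of possibly degenerate configurations --- e.g.\ Menger families sharing vertices, low-degree $v_i$'s, or exceptional local structure near $\gamma$. To handle these, I expect the argument to first reduce to a minimal counterexample --- say by contracting edges subject to preserving $\fw\ge 4$ --- after which the structure becomes rigid enough for a finite case analysis to conclude, perhaps with appeal to the classification of 3-connected projective-planar triangulations of small order.
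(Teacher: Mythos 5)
There is a genuine gap: what you have written is a plan whose hard steps are all deferred with ``should follow'', ``I expect'', and ``perhaps''. Concretely, (i) the diagonal adjacencies $v_1v_3$ and $v_2v_4$ do not follow from any routine Menger argument --- face-width $4$ gives you four disjoint paths between \emph{opposite} arcs of the boundary of the cut-open disk, but it gives no path joining $v_1$ to $v_3$ that is disjoint from the structures you have already committed to the other thirteen adjacencies; (ii) realizing all fifteen adjacencies \emph{simultaneously} is the entire content of the theorem, and this is exactly the step you postpone to an unspecified ``finite case analysis''; and (iii) the classification you gesture at (3-connected projective-planar triangulations of small order) is not the relevant one --- the extremal graphs here are not triangulations, and nothing in your reduction forces them to be. Note also that $K_6$ itself embeds in the projective plane with face-width $3$, and its $\Delta Y$-transforms have face-width $3$ but no $K_6$-minor, so any correct argument must use the jump from $3$ to $4$ in an essential way; your sketch never isolates where that happens.

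The paper's actual proof is of a completely different character: it reduces to minor-minimal graphs of face-width $4$ in the projective plane, invokes Randby's theorem that these form the $\Delta Y/Y\!\Delta$-class of the projective $4\times 4$ grid (a known finite list of $270$ graphs), observes that a $\Delta Y$-transformation cannot create a $K_6$-minor that was absent before (so it suffices to treat the triangle-free members, of which there are exactly eight), and then exhibits a $K_6$-minor in each of the eight by hand. If you want to pursue your direct route, the minimal-counterexample reduction you propose would land you precisely on Randby's list, at which point you would be redoing the paper's finite check; without that classification, the ``rigid structure'' you hope for after contraction is not established.
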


We suspect that $c_\Sigma = 4$ for every $\Sigma$; however a proof (or disproof) seems to be out of reach. Our main result given below extends Theorem \ref{thm:pp} to arbitrary surfaces and strengthens the afore-mentioned result of Robertson and Seymour from \cite{RS13} in two ways. First, we obtain an upper bound on $c_{\Sigma}$ that is independent of the surface. In addition to this, we are able to loosen the face-width requirement by involving a condition on the non-separating face-width instead. Note that for graphs on the projective plane, we have $\nsfw(G)=\fw(G)$ and that $\nsfw(G)\ge\fw(G)$ holds in general.

\begin{theorem}
\label{thm:main}
Every graph $G$ embedded on a non-spherical surface with
$\nsfw(G) \geq 7$ contains the complete graph $K_6$ as a minor.
\end{theorem}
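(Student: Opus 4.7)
My plan is to proceed by induction on the Euler genus $g$ of $\Sigma$. For the base case $g = 1$, the surface is the projective plane; since every non-contractible simple closed curve there is non-separating, we have $\nsfw(G) = \fw(G) \geq 7 \geq 4$, and Theorem~\ref{thm:pp} applies directly to give $K_6 \leqm G$.

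For the inductive step, assuming the result on all surfaces of Euler genus less than $g \geq 2$, I would perform a surgery along a minimum-intersection non-separating curve. Pick a surface non-separating simple closed curve $\gamma \subset \Sigma$ minimizing $|\gamma \cap G|$, so that this minimum equals $\nsfw(G) = k \geq 7$. After subdividing edges we may assume $\gamma$ crosses $G$ transversally at degree-2 vertices $v_1, \ldots, v_k$, with one incident edge on each side of $\gamma$. Cut $\Sigma$ along $\gamma$, splitting each $v_i$ into $v_i^+$ on cuff $c_1$ and $v_i^-$ on cuff $c_2$ (in the two-sided case; the one-sided case is analogous with a single cuff), producing a bordered surface $\Sigma_\gamma$. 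Then cap each cuff with an open disc containing a new apex vertex: $a \in D_1$ joined to every $v_i^+$ and $b \in D_2$ joined to every $v_i^-$. This yields an embedded graph $G^+$ in a closed surface $\widehat\Sigma$ of strictly smaller Euler genus.

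The argument rests on two claims. \emph{(A) Face-width preservation:} $\nsfw(G^+) \geq 7$ in $\widehat\Sigma$. Any non-separating simple closed curve $\delta \subset \widehat\Sigma$ can be homotoped into the interior of $\Sigma_\gamma$ (the discs are simply connected and admit a collar retraction), where it corresponds to a curve $\pi(\delta) \subset \Sigma$ disjoint from $\gamma$. A short component-count argument — if $\pi(\delta)$ separated $\Sigma$ into $A \sqcup B$ with $\gamma \subset A$, then the caps $D_1, D_2$ would sit inside the $A$-side of $\widehat\Sigma \setminus \delta$, leaving $B$ as a separate component and contradicting the non-separating assumption — shows that $\pi(\delta)$ is non-separating in $\Sigma$, so $|\delta \cap G^+| = |\pi(\delta) \cap G| \geq \nsfw(G) \geq 7$. \emph{(B) Minor lifting:} $K_6 \leqm G^+$ implies $K_6 \leqm G$. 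Given both, induction produces $K_6 \leqm G^+$ in $\widehat\Sigma$, and (B) pushes this back to $G$.

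The hard part will be claim (B). The apex vertices $a, b$ and the split copies $v_i^\pm$ must be eliminated from any $K_6$-branch sets in $G^+$ in order to produce branch sets in $G$, but two branch sets using $v_i^+$ and $v_i^-$ respectively would collide upon the re-identification $v_i^+ \sim v_i^-$. My plan for the resolution uses the slack $k \geq 7 > 6$: each apex vertex used in a branch set would be replaced by a path threading through the $v_i$'s in cyclic order along $\gamma$, which the embedding in $\Sigma_\gamma$ provides as a planar substructure near $\gamma$; and the excess split pair would absorb any single conflict among the at most six branch sets. Making this rerouting precise — and verifying that $7$ (rather than a smaller number) is exactly the amount of slack required — is the most delicate part of the argument, and is where I expect the bulk of the work to lie.
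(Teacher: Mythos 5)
Your proposal has a genuine gap in each of its two key claims, and each is fatal on its own. Claim (A) is false: after cutting along $\gamma$ and capping the cuffs, a non-separating simple closed curve $\delta$ in $\widehat\Sigma$ may pass through a capped disc $D_1$, entering and leaving through graph-free arcs of the cuff and meeting $G^+$ only in the apex $a$ (or one or two spokes). Your homotopy pushing $\delta$ off $D_1$ does yield a curve in $\Sigma$ disjoint from $\gamma$, but it does not preserve the number of intersections with the graph: to vacate the disc the curve must run alongside the cuff, and there it crosses the edges of $G$ incident with the $v_i$'s from inside $\Sigma_\gamma$. So the equality $|\delta\cap G^+|=|\pi(\delta)\cap G|$ fails. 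The correct statement (Theorem~\ref{newfacewidth}) is only $\nsfw(G^+)\ge\left\lceil \tfrac12\nsfw(G)\right\rceil$, i.e.\ $\ge 4$ from $7$, and this halving is essentially sharp; hence the hypothesis $\nsfw\ge 7$ is not available after even one cut and the induction collapses.

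Claim (B) is where the entire content of the theorem lives, and the proposed rerouting does not exist. The apex $a$ is adjacent to all of $v_1^+,\dots,v_k^+$, but in $G$ these are merely $k$ scattered points on $\gamma$; there is no guaranteed connected ``planar substructure near $\gamma$'' through them that could be contracted to simulate $a$ — producing such a structure is precisely the hard work the paper does. A $K_6$-minor of $G^+$ could use $a$ and $b$ as two of its six branch vertices, after which nothing of it survives in $G$. The paper's proof forgoes induction on genus altogether: it takes a shortest non-separating closed face-chain $\Lambda$, builds two tight cycles $\Omega_L,\Omega_R$ homotopic to $\Gamma(\Lambda)$ bounding a cylinder, extracts an internal linkage of $n$ paths inside the cylinder together with an exterior linkage of at least $7$ disjoint $(\Omega_L,\Omega_R)$-paths outside it (Lemma~\ref{lem:disjoint-paths}), positions their endpoints via Lemma~\ref{distance-2}, and then applies the explicit cylinder-grid gadgets of Theorem~\ref{k6-on-c} to exhibit $K_6$ directly (with a separate reduction to the projective plane when $\Lambda$ is one-sided). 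Cutting along the curve appears there only as an auxiliary tool, always with the halved face-width bound, never as an inductive engine.
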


There is a continuing interest in the structure of graphs that do not contain $K_6$ as a minor.
An outstanding open problem in this area is a conjecture of J{\o}rgensen \cite{Jorg} that every 6-connected graph has no $K_6$-minor if and only if it can be made planar by removing one vertex. An asymptotic version of J{\o}rgensen's Conjecture has been recently proved by Kawarabayashi et al.\ \cite{KNTW}. The known structure of such graphs is used in \cite{KKMR} in the design of an efficient algorithm for constructing linkless embeddings of graphs in 3-space. As for graphs embedded in surfaces, several papers \cite{KMN,MN,NOO,MNOS} concern $K_6$-minors in triangulations of surfaces of small genus, while \cite{FM} obtained a general result about $K_6$-minors in graphs in the projective plane.

All graphs in this paper are finite and simple. Paths and cycles have no ``repeated vertices''. A path $P=x_0x_1\dots x_n$ is given by the sequence of its consecutive vertices $x_0,x_1,\dots,x_n$, but it is considered as a subgraph. If a path $P$ has endvertices $u$ and $v$, then $P$ is called a \emph{$(u,v)$-path} (also \emph{$(v,u)$-path}). The \emph{order} of a path $P$, denoted as $|P|$, is its number of vertices.
For vertices $a$ and $b$ on a path $P$, $P[a,b]$ denotes the $(a,b)$-path contained in $P$, and $P[a,b)=P[a,b] - b$ denotes the path from $a$ to the predecessor of $b$. The paths $P(a,b]$ and $P(a,b)$ are defined analogously.
The same notation is used for cycles with given clockwise orientation, where $C[a,b]$ denotes the path from $a$ to $b$ in the clockwise direction.

For $A_i \subseteq V(G)$ or $A_i \subseteq G$ ($i=1,2$), an \emph{$(A_1,A_2)$-path\/} is an $(a_1,a_2)$-path $P$ with $V(P)\cap V(A_i)=\{a_i\}$ for $i=1,2$, an \emph{$(A_1)$-path\/} is an $(a_1,a_2)$-path with $V(P) \cap V(A_1) =\{a_1,a_2\}$, where $a_1\ne a_2$ and $P$ contains an edge that is not in $A_1$.

\section{Face-chains}
\label{sec:FaceChains}

Let $G$ be a graph embedded in a surface $\Sigma$. We denote by $F(G)$ the set of all \emph{facial walks} of $G$. Each facial walk is also considered as being a subgraph of $G$ consisting of all vertices and edges on the boundary of a face of the embedding. The open face corresponding to the facial walk $F$ will be denoted by $F^\circ$.

Let $n\geq 0$ be an integer. A \emph{face-chain} $\Lambda$ of \emph{length} $n$ is an alternating sequence $x_0, F_0, x_1,\dots, \allowbreak x_{n-1},F_{n-1},x_n$ such that, for $i=0,\dots,n-1$, $F_i \in F(G)$ and $x_i,x_{i+1} \in V(F_i)\cup E(F_i)$. Note that $x_{i+1}$ is either a vertex or an edge in $F_i\cap F_{i+1}$. We also write $|\Lambda|=n$ to denote the length of $\Lambda$. If $x_0=x_n$, then the face-chain is said to be \emph{closed}.  We define $X(\Lambda) = \{x_0,\dots,x_n\}$ and $G(\Lambda) = \bigcup_{i=0}^{n-1} F_i \subseteq G$.

Let $\Lambda=x_0,F_0,x_1,\dots,x_{n-1},F_{n-1},x_0$ be a closed face-chain.
We define a closed curve $\Gamma(\Lambda) \subseteq \Sigma$ by taking the composition of simple arcs in each $F_i$ joining $x_i$ and $x_{i+1}$. (Note that the choice of a simple arc in $F_i$ is determined up to homotopy, if we assume that every $F_i$ is homeomorphic to an open disk and that each of $x_i$ and $x_{i+1}$ appears in the facial walk $F_i$ just once; these assumptions will always be satisfied.)

%Conversely, a simple closed curve $\Gamma$ on $\Sigma$ having $n$ points in common with $G$ determines an alternating sequence $x_0,F_0, x_1,\dots,x_{n-1},F_{n-1},x_0$ of elements of $G$ (where $x_i \in E(G)\cup V(G)$ and $F_i \in F(G)$), such that $\Gamma \cap G=\{x_0,\dots,x_{n-1}\}$ and $\Gamma \subseteq \left( \bigcup_{i=0}^{n-1} F_i^\circ \right) \cup \{x_0,\dots,x_{n-1}\}$.

We say that a face-chain $\Lambda$ is \emph{nice}, if for all $0\leq i<j \leq n-1$, we have $F_i \neq F_j$, $x_i \neq x_j$, and $x_k\ne x_n$ for $1\le k < n$. Note that if $\Lambda$ is nice then $\Gamma(\Lambda)$ is simple. A nice face-chain $\Lambda$ is \emph{clean} if for $i=0,\dots,n-1$, $F_i \cap F_{i+1}=\{x_{i+1}\}$ (where $F_n=F_0$) and for all $0 \leq i<j \leq n-1$, with $1 \neq j-i \neq n-1$, we have $F_i\cap F_j =\emptyset$.

To avoid repetition let us state the following assumption together with its notation, since it will be common to several statements that follow.

\bigskip

\noindent
\textbf{(H1)} For a graph $G$ embedded in a surface $\Sigma$, let $\Lambda=x_0,F_0,x_1,\dots,F_{n-1},x_{0}$ be a closed face-chain of length $n$ such that:
\begin{enumerate}
	\item[(i)] $\Gamma(\Lambda)$ is surface non-separating;
	\item[(ii)] subject to (i), $|\Lambda|$ is minimum.
\end{enumerate}

\medskip

We shall abuse terminology and call a face-chain $\Lambda$ surface separating or contractible when $\Gamma(\Lambda)$ has that property.

The following result is well-known (cf.~\cite{mohar}) and is referred to as the \emph{$3$-path condition}.

\begin{theorem}
\label{3-path-new}
Let $G$ be a graph embedded on $\Sigma$, and let $x,y\in V(G)$. Suppose $G$ contains three $(x,y)$-paths, $P_1,P_2,P_3$, pairwise disjoint except for their ends. Let $C_{ij}$ $(1 \leq i<j \leq 3)$ be the cycle $P_i \cup P_j$. Then the following holds:
\begin{enumerate}
\item[\rm (a)] If two of the three cycles $C_{ij}$ are contractible, then  so is the third.
\item[\rm (b)] If two of the three cycles $C_{ij}$ are surface separating, then  so is the third.
\end{enumerate}
\end{theorem}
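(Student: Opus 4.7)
The plan is to prove both parts by reducing them to a single algebraic identity: viewing each cycle as a mod-$2$ sum of edges, one has
\[
C_{12}+C_{13}+C_{23}\equiv 0\pmod 2,
\]
because each edge of $P_1$ lies in $C_{12}$ and $C_{13}$ but not in $C_{23}$, and symmetrically for edges of $P_2$ and $P_3$, so every edge appears in exactly two of the three cycles.

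For part (b) I would read this identity in $H_1(\Sigma;\mathbb{Z}_2)$, obtaining $[C_{23}]=[C_{12}]+[C_{13}]$. By the definition adopted in the paper, a simple closed curve is surface separating exactly when it is zero-homologous, and a simple closed curve on a (closed) surface is zero-homologous in the topological cutting sense iff it vanishes in $H_1(\Sigma;\mathbb{Z}_2)$; hence if any two of the three $C_{ij}$ are surface separating, so is the third.

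For part (a) I would lift the same identity to the fundamental group. Orient each $P_i$ from $x$ to $y$ and take $x$ as the basepoint, defining loops $\alpha_{ij}:=P_i\cdot\overline{P_j}$ whose underlying simple closed curves are the $C_{ij}$. The concatenation $\alpha_{12}\cdot\alpha_{23}=P_1\,\overline{P_2}\,P_2\,\overline{P_3}$ contracts along $P_2$ to $P_1\cdot\overline{P_3}=\alpha_{13}$, so $\alpha_{13}=\alpha_{12}\,\alpha_{23}$ in $\pi_1(\Sigma,x)$. A simple closed curve on a surface is contractible iff it is freely null-homotopic, which is equivalent to its class in $\pi_1$ (based at any point on the curve) being trivial, since the trivial free-homotopy class corresponds to the trivial conjugacy class $\{1\}\subset\pi_1(\Sigma,x)$. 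Thus if two of the $C_{ij}$ are contractible the corresponding two $\alpha_{ij}$ are trivial, hence so is the third $\alpha$, and the third cycle is contractible.

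The step I expect to need the most care is the translation between ``the simple closed curve $C_{ij}$ is contractible'' and ``the based loop $\alpha_{ij}$ is trivial in $\pi_1(\Sigma,x)$'', together with the bookkeeping that ensures the orientations of the $P_i$ are chosen consistently so that the identity $\alpha_{13}=\alpha_{12}\,\alpha_{23}$ really holds. Once this correspondence is pinned down, both (a) and (b) fall out immediately from the single mod-$2$ edge-counting identity above.
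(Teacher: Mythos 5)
Your argument is correct, and it is essentially the standard proof of the 3-path condition: the paper itself gives no proof, citing it as well-known from Mohar--Thomassen, where exactly this reasoning appears --- the identity $\alpha_{13}=\alpha_{12}\alpha_{23}$ in $\pi_1(\Sigma,x)$ for part (a) and the mod-2 relation $[C_{12}]+[C_{13}]+[C_{23}]=0$ in $H_1(\Sigma;\mathbb{Z}_2)$ (together with the fact that a simple closed curve separates iff it is $\mathbb{Z}_2$-null-homologous) for part (b). The two points you flag as needing care --- consistency of orientations and the equivalence between contractibility of the simple closed curve and triviality of the based loop --- are handled correctly as you describe them, so nothing further is needed.
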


Let $\Lambda$ be a closed face-chain. Let $\Lambda'=w_0,F_1',w_1,\dots,w_{k-1},F_{k}',w_{k}$ be a face-chain (not closed) of length $k$ such that $w_0$ is incident with a face $F_i$ and $w_{k}$ is incident with a face $F_j$ ($0 \leq i < j \leq n-1$) in the face-chain $\Lambda$. There are two face-chains in $\Lambda$ whose first and last faces are $F_i$ and $F_j$. We can combine each of these with $\Lambda'$ to get a closed face-chain containing $\Lambda'$.
By using the 3-path property, we deduce the following.

\begin{theorem}
\label{maintool}
Let $G$ be a graph embedded in $\Sigma$, and let $\Lambda$ be as in {\rm (H1)}. Let $\Lambda'=w_0,F_1',w_1,\dots,w_{k-1},F_{k}',w_{k}$ be a face chain of length $k \geq 0$, where $w_0$ and $w_k$ vertices or edges that are incident with faces $F_i$ and $F_j$ in $\Lambda$ $(0\leq i<j \leq n-1)$. Then the closed face-chain formed by $\Lambda'$ and the shorter one of the two face-chains from $F_i$ to $F_j$ in $\Lambda$ is a face-chain of length $\leq 2k+2$.
\end{theorem}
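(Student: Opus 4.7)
The plan is to construct the two closed face-chains $\Lambda_1$ and $\Lambda_2$ obtained by attaching $\Lambda'$ to each of the two sub-face-chains of $\Lambda$ joining $F_i$ to $F_j$, use a $\mathbb{Z}/2$-homology version of the 3-path condition to show that at least one of them is surface non-separating, and then invoke the minimality in (H1)(ii) to bound its length.

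Set $p=j-i$ and $q=n-p$, so the two sub-face-chains of $\Lambda$ between $F_i$ and $F_j$ have $p+1$ and $q+1$ faces respectively. Gluing $\Lambda'$ to each of them (with $w_0$ as the transition at $F_i$ and $w_k$ as the transition at $F_j$) yields valid closed face-chains $\Lambda_1,\Lambda_2$ of lengths at most $k+p+1$ and $k+q+1$. I claim that in $H_1(\Sigma;\mathbb{Z}/2)$,
\[
[\Gamma(\Lambda_1)]+[\Gamma(\Lambda_2)] = [\Gamma(\Lambda)].
\]
Indeed, the arcs contributed by $\Lambda'$ appear in both $\Gamma(\Lambda_1)$ and $\Gamma(\Lambda_2)$ and cancel; the arcs in faces $F_\ell$ with $\ell\ne i,j$ split between $\Gamma(\Lambda_1)$ and $\Gamma(\Lambda_2)$ so as to reconstitute the corresponding arcs of $\Gamma(\Lambda)$; and the arcs inside the open disks $F_i^\circ, F_j^\circ$ differ from those of $\Gamma(\Lambda)$ only by $1$-chains bounded inside disks, which are null-homologous. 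This identity is essentially the 3-path condition of Theorem~\ref{3-path-new}(b) applied topologically to three arcs from $F_i$ to $F_j$---the two pieces of $\Lambda$ and the arc $\Gamma(\Lambda')$---after small endpoint perturbations within $F_i^\circ$ and $F_j^\circ$. Since $\Gamma(\Lambda)$ is non-separating by (H1)(i), at least one of $\Lambda_1,\Lambda_2$ must be surface non-separating.

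By the minimality clause (H1)(ii), every surface non-separating closed face-chain has length at least $n$. Hence, if $\Lambda_1$ is non-separating then $k+p+1\ge n=p+q$, giving $q\le k+1$; symmetrically, if $\Lambda_2$ is non-separating then $p\le k+1$. In either case $\min(p,q)\le k+1$, so attaching $\Lambda'$ to the shorter of the two pieces of $\Lambda$ produces a closed face-chain of length at most $k+\min(p,q)+1\le 2k+2$, as required.

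The main obstacle is the homology step: one must verify the identity above despite the fact that the ``endpoints'' $w_0,w_k$ of $\Lambda'$ in general do not coincide with any $x_\ell$ of $\Lambda$, so that the three arcs from $F_i$ to $F_j$ appearing in Theorem~\ref{3-path-new}(b) are not literally concurrent. Since an open disk has trivial first homology, the required adjustments confined to $F_i^\circ$ and $F_j^\circ$ do not affect $\mathbb{Z}/2$-classes, and the identity, and hence the whole argument, goes through.
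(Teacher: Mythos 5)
Your proof is correct and follows essentially the same route as the paper: form the two closed face-chains $\Lambda_1\cup\Lambda'$ and $\Lambda_2\cup\Lambda'$, argue via the (homological) 3-path condition that one of them is surface non-separating and hence of length at least $n$ by (H1)(ii), and deduce the bound from the total length $n+2k+2$. Your explicit $\mathbb{Z}/2$-homology identity is simply a carefully spelled-out justification of the paper's one-line appeal to Theorem~\ref{3-path-new}(b), correctly handling the fact that $w_0,w_k$ need not coincide with vertices of $\Lambda$.
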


\begin{proof}
Let $\Lambda_1=F_i,x_{i+1},F_{i+1},\dots,x_j,F_j$ and $\Lambda_2=F_j,x_{j+1},F_{j+1},\dots,\allowbreak x_0,F_0,\allowbreak \dots,x_i,F_i$ be the two face-subchains from $F_i$ to $F_j$ contained in $\Lambda$. They together use $n+2$ faces. Let us now consider the two closed face-chains $\Lambda_1 \cup \Lambda'$ and $\Lambda_2 \cup \Lambda'$. Clearly,
$$
   |\Lambda_1 \cup \Lambda'| + |\Lambda_2 \cup \Lambda'| = n + 2 + 2k.
$$
By the 3-path condition, at least one of them is surface non-separating, thus it is of length at least $n$ by (H1)(ii). So, it follows that the length of the other one is at most $2k+2$.
\end{proof}

The following theorem is well-known for the face-width (cf.~\cite{mohar}); the proof for non-separating face-width is essentially the same.

\begin{theorem}
\label{fw-lemma}
Let $G$ be  a 3-connected graph embedded on a surface $\Sigma$ with $\nsfw(G) \geq 3$. Then all facial walks of $G$ are cycles, and any two of them are either disjoint or intersect in a
single vertex or a single edge.
\end{theorem}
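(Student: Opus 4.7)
The plan is to handle the two assertions separately, in each case by constructing a simple closed curve $\gamma$ in $\Sigma$ meeting $G$ in at most two points, then using $\nsfw(G) \geq 3$ to exclude non-separating $\gamma$ and combining $k$-connectivity with the local rotation at the vertices of $G \cap \gamma$ to exclude separating $\gamma$.

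For the first claim, suppose $F$ is a facial walk visiting some vertex $v$ twice. An arc in $F^\circ$ joining the two boundary preimages of $v$ becomes a simple closed curve $\gamma$ in $\Sigma$ with $\gamma \cap G = \{v\}$. If $\gamma$ were separating, then by $2$-connectivity $G - v$ would be connected and lie in one side $\Sigma_1$ of $\gamma$; but at $v$ the curve $\gamma$ enters one $F$-corner and exits through the other, so the two $F$-corners occupy different positions in the rotation, and consequently each of the two half-disks at $v$ cut by $\gamma$ contains at least one edge incident to $v$. Those edges force $G - v$ to meet both $\Sigma_1$ and $\Sigma_2$, contradicting its connectivity.

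For the second claim, let $F_1 \neq F_2$ be facial cycles whose intersection is more than a single vertex or single edge. If $F_1 \cap F_2$ contains a path $u_0 u_1 u_2$, then at the interior vertex $u_1$ both $F_1$ and $F_2$ use the same pair of incident edges $u_0 u_1$, $u_1 u_2$, so in the rotation at $u_1$ these two edges must be cyclic neighbors with $F_1$ on one side and $F_2$ on the other; this is only possible when $\deg(u_1) = 2$, contradicting $3$-connectivity. Otherwise every component of $F_1 \cap F_2$ is a single vertex or single edge, and there are at least two components. Pick $u, v$ in distinct components and take arcs $\gamma_i \subset F_i^\circ$ from $u$ to $v$; their union $\gamma = \gamma_1 \cup \gamma_2$ is a simple closed curve with $\gamma \cap G = \{u, v\}$, and the non-separating case again contradicts $\nsfw(G) \geq 3$.

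The heart of the matter is the separating sub-case in this last scenario. Using $3$-connectivity, $G - \{u, v\}$ is connected and disjoint from $\gamma$, so it lies in one side $\Sigma_1$ of $\gamma$; hence every edge at $u$ whose other endpoint lies in $G - \{u, v\}$ has its tangent at $u$ in the $\Sigma_1$-side of the local disk, and the $\Sigma_2$-side can accommodate at most the parallel edge $uv$ (should it exist in $G$). A count in the rotation at $u$ then suffices. If $u$ is an isolated vertex of $F_1 \cap F_2$, then the $F_1$- and $F_2$-corners at $u$ cannot be adjacent in the rotation---an edge between adjacent corners would be shared, hence in $F_1 \cap F_2$, contradicting that $u$ is isolated---so each half of the local disk contains at least two edges incident to $u$; if instead $u$ sits on an edge-component $uw$ of $F_1 \cap F_2$ (necessarily with $w \neq v$), then one half contains the single non-$uv$ edge $uw$ while the other contains $\deg(u) - 1 \geq 2$ edges. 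In every case the $\Sigma_2$-side carries a non-$uv$ edge at $u$, the required contradiction.
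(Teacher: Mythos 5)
Your proof is correct, and it is essentially the standard argument that the paper does not reproduce but merely cites (the face-width version from \cite{mohar}, with the remark that the non-separating version is ``essentially the same''). You have carried out the adaptation correctly: in each of the two configurations you build a simple closed curve meeting $G$ in at most two points, so $\nsfw(G)\ge 3$ only tells you the curve is surface-separating (not contractible), and your rotation-plus-connectivity argument indeed uses nothing beyond separation, which is exactly the point of the adaptation.
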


The following is an easy corollary of Theorem \ref{fw-lemma}, Theorem \ref{maintool} (with $k=0$) and the 3-path condition.

\begin{theorem}
\label{clean}
Let $G$ be  a 3-connected graph embedded on a surface $\Sigma$ with $\nsfw(G) \geq 3$, and let $\Lambda$ be as in {\rm (H1)}. Then $\Lambda$ is clean.
\end{theorem}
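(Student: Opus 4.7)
The plan is to verify in turn each of the four conditions defining \emph{clean}: (a) the faces $F_0,\ldots,F_{n-1}$ are pairwise distinct, (b) the elements $x_0,\ldots,x_{n-1}$ are pairwise distinct, (c) $F_i\cap F_{i+1}=\{x_{i+1}\}$ for every $i$, and (d) $F_i\cap F_j=\emptyset$ whenever $2\le j-i\le n-2$. In each case the strategy is to assume the condition fails and then invoke Theorem \ref{maintool} with $k=0$ (so that $\Lambda'$ consists of a single shared vertex or edge) together with the minimality in (H1)(ii), producing either a direct contradiction or a shorter non-separating closed face-chain.

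For (a), suppose $F_i=F_j$ with $0\le i<j\le n-1$ and pick any $w_0\in F_i=F_j$. Applying Theorem \ref{maintool} with $\Lambda'=w_0$ bounds the shorter of the two face-subchains from $F_i$ to $F_j$ by $2$ faces, so after a cyclic reindexing of $\Lambda$ we may assume $F_i=F_{i+1}$. Since $F_i^\circ$ is an open disk and each of $x_i$ and $x_{i+2}$ appears on its boundary walk only once, the two arcs of $\Gamma(\Lambda)$ realizing $x_i\to x_{i+1}\to x_{i+2}$ inside $F_i$ are homotopic rel endpoints to a single arc $x_i\to x_{i+2}$; this yields a non-separating closed face-chain of length $n-1$, contradicting (H1)(ii). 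For (b), if $x_i=x_j$ with $0\le i<j\le n-1$, split $\Lambda$ at this common point into two closed sub-face-chains $\Lambda_1,\Lambda_2$, each of length strictly less than $n$. Since $\Gamma(\Lambda)$ is the concatenation of $\Gamma(\Lambda_1)$ and $\Gamma(\Lambda_2)$ and has non-trivial $\mathbb{Z}/2$-homology class, the 3-path condition ensures that at least one of $\Gamma(\Lambda_1),\Gamma(\Lambda_2)$ is itself non-separating, again contradicting minimality.

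Once niceness is in hand, (c) and (d) are brief. By Theorem \ref{fw-lemma} any two distinct facial walks meet in at most a single vertex or edge; since $x_{i+1}\in F_i\cap F_{i+1}$, this forces (c). For (d), take any $w\in F_i\cap F_j$ with $2\le j-i\le n-2$ and apply Theorem \ref{maintool} with $\Lambda'=w$: the shorter of the two combined closed face-chains must then have length at most $2$. But by (a) we have $F_i\ne F_j$, and the shorter face-subchain from $F_i$ to $F_j$ in $\Lambda$ contains at least one intermediate face, so it already uses at least three faces---the desired contradiction. The only delicate ingredient is the disk-shortcut used in (a), whose validity rests on the standing assumption that each $F_i^\circ$ is an open disk and each $x_{i+1}$ lies on the facial walk $F_i$ only once; once this is granted, the rest is direct counting with Theorem \ref{maintool}.
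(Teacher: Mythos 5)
Your argument is correct and is precisely the proof the paper intends: the paper gives no details, stating only that the result is an easy corollary of Theorem \ref{fw-lemma}, Theorem \ref{maintool} with $k=0$, and the 3-path condition, and your four steps instantiate exactly these three ingredients (with the homotopic shortcut through a repeated disk face and the homological splitting at a repeated $x_i$ being the standard ways to finish the "nice" part). The level of rigor regarding simplicity of the shortened curves matches the paper's own conventions, so nothing further is needed.
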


The following result is an easy corollary of the 3-path condition. Its proof for the edge-width can be found in \cite{Thomassen93}; for the proof of the face-width version, see \cite{mohar}; the proof for the non-separating face-width is essentially the same as in \cite{mohar}.

\begin{theorem}
\label{newfacewidth}
Let $G$ be embedded in a surface $\Sigma$, and let $\Lambda$ be as in {\rm (H1)}. Let $G'$ be obtained from $G$ by cutting $\Sigma$ along $\Gamma(\Lambda)$ and capping off the resulting cuffs. Then $\fw(G') \geq  \left\lceil  \frac{1}{2} \fw(G) \right\rceil$ and\/ $\nsfw(G') \geq  \left\lceil  \frac{1}{2} \nsfw(G) \right\rceil$.
\end{theorem}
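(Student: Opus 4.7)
The plan is to take a shortest non-contractible (resp.\ non-separating) simple closed curve $\gamma'$ in $\Sigma'$ and build from it a non-separating closed curve $\gamma$ in $\Sigma$ with $|\gamma \cap G| \le 2|\gamma' \cap G'|$. Passing to a simple non-separating sub-curve of $\gamma$ (and noting that non-separating implies non-contractible) then yields both inequalities simultaneously.

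First I would isotope $\gamma'$ into general position with respect to $G'$ and to the boundary circle(s) of the capping disk(s), which correspond in $\Sigma$ to $\Gamma(\Lambda)$. If $\gamma' \cap \Gamma(\Lambda) = \emptyset$ then $\gamma' \subseteq \Sigma \setminus \Gamma(\Lambda) \subseteq \Sigma$, and a short topological argument using that $\Gamma(\Lambda)$ is non-separating shows $\gamma'$ is still non-contractible (resp.\ non-separating) in $\Sigma$: any bounding disk would either lie in $\Sigma \setminus \Gamma(\Lambda) \subseteq \Sigma'$, contradicting non-contractibility there, or contain all of $\Gamma(\Lambda)$, making $\Gamma(\Lambda)$ contractible and hence separating in $\Sigma$, contrary to~(H1)(i). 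In this easy case $|\gamma' \cap G| = |\gamma' \cap G'|$ gives the required bound directly.

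In the remaining case $\gamma'$ crosses $\Gamma(\Lambda)$ in $m \ge 1$ transverse points, decomposing into outside arcs $\beta_1,\dots,\beta_m \subseteq \Sigma \setminus \Gamma(\Lambda)$ alternating with inside arcs $\delta_1,\dots,\delta_m$ in the capping disks. Each $\beta_j$ is a face-chain $\Lambda_j$ in $G$ of length essentially $k_j := |\beta_j \cap G|$ whose endpoints are incident to entries of $\Lambda$, so Theorem~\ref{maintool} applies and gives that the shorter of the two sub-chains of $\Lambda$ between those endpoints has length at most $k_j + 2$; equivalently, the shorter arc of $\Gamma(\Lambda)$ between the endpoints of $\beta_j$ meets $G$ in $O(k_j)$ points. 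I would form $\gamma$ by replacing each $\delta_j$ with an arc of $\Gamma(\Lambda)$ between its endpoints. Flipping any single such replacement changes $[\gamma] \in H_1(\Sigma;\mathbb{Z}/2)$ by the nonzero class $[\Gamma(\Lambda)]$, so the $2^m$ candidates split evenly into two homology classes, one of which is nonzero; hence at least $2^{m-1}$ of them are non-separating in $\Sigma$.

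The main obstacle is producing a non-separating candidate whose $G$-intersection count is at most $2|\gamma' \cap G'|$. I would combine three ingredients: (i) the per-$\beta_j$ bound from Theorem~\ref{maintool} controlling the shorter $\Gamma(\Lambda)$-arc between the endpoints of $\beta_j$, (ii) the 3-path condition (Theorem~\ref{3-path-new}) applied to $\beta_j$ and the two arcs of $\Gamma(\Lambda)$ between its endpoints, which together with the non-separating hypothesis on $\Gamma(\Lambda)$ identifies which single-$\beta_j$ closure is separating and which is non-separating, and (iii) a pairing of the $\delta_j$-replacements that respects the cyclic order of the points $\gamma' \cap \Gamma(\Lambda)$ along $\Gamma(\Lambda)$, so that the $\Gamma(\Lambda)$-arcs used in $\gamma$ are pairwise disjoint with total $G$-intersection at most $\sum_j k_j + O(m) = |\gamma' \cap G'| + O(m)$. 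The delicate combinatorial point is reconciling ``use short arcs'' with ``produce a non-separating $\gamma$''; once this is arranged, the additive $O(m)$ slack is absorbed into the ceiling $\lceil \cdot / 2\rceil$, yielding the stated halving bounds.
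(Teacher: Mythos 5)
You have the right skeleton --- the easy case (when $\gamma'$ misses the capping disks) is handled correctly, and the decomposition of $\gamma'$ into outside arcs $\beta_j$ and inside arcs $\delta_j$, together with the observation that the $2^m$ closures fall into two $\mathbb{Z}_2$-homology classes differing by $[\Gamma(\Lambda)]\neq 0$, is sound. (For what it is worth, the paper itself gives no proof here; it cites Thomassen and Mohar--Thomassen.) But the heart of the argument is missing, and you flag it yourself without resolving it: nothing you write guarantees that \emph{any} of the $2^{m-1}$ non-separating closures meets $G$ in at most $2|\gamma'\cap G'|$ points. The parity argument gives no control over \emph{which} half is non-separating, and a priori every non-separating closure could use long arcs of $\Gamma(\Lambda)$. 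Theorem~\ref{maintool} cannot rescue this: it bounds the length of the closure through the \emph{shorter} subchain of $\Lambda$ but says nothing about that closure being non-separating (its proof only shows that the \emph{other} closure is non-separating, which is the useless direction here). Worse, your fallback that ``the additive $O(m)$ slack is absorbed into the ceiling'' is simply false: from $\nsfw(G)\le 2\,\nsfw(G')+c$ with $c\ge 1$ one cannot deduce $\nsfw(G')\ge\left\lceil\tfrac12\nsfw(G)\right\rceil$ (take $\nsfw(G)=7$, $c=1$: you get $\nsfw(G')\ge 3$, not $4$). The curve you produce must meet $G$ in at most exactly $2|\gamma'\cap G'|$ points, with no additive error, and choosing ``short arcs'' per $\delta_j$ does not achieve even that, since $m$ pairwise disjoint arcs of $\Gamma(\Lambda)$ can carry far more than $|\gamma'\cap G'|$ of the vertices $x_i$.

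The missing idea is that in the crossing case you must again exploit the hypothesis that $\gamma'$ is non-separating (resp.\ non-contractible) in $\Sigma'$ --- you only use it in the easy case. The standard repair: the $2m$ endpoints cut $\Gamma(\Lambda)$ into arcs $\sigma_1,\dots,\sigma_{2m}$, and the only two mod-$2$ cycles in $\bigcup_j\beta_j\cup\Gamma(\Lambda)$ using every $\beta_j$ are the two alternating choices $z_1=\sum_j\beta_j+\sum_{i\,\mathrm{odd}}\sigma_i$ and $z_2=\sum_j\beta_j+\sum_{i\,\mathrm{even}}\sigma_i$. Each $z_t$ differs from $\gamma'$ by a cycle contained in the capping disks, so $[z_t]=[\gamma']\ne 0$ in $H_1(\Sigma';\mathbb{Z}_2)$, and a cycle of the cut-open surface that is non-null in $\Sigma'$ remains non-null in $\Sigma$ (the same kind of argument as your easy case). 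Hence \emph{both} $z_1$ and $z_2$ are non-separating in $\Sigma$, each therefore meets $G$ in at least $\nsfw(G)$ points, and their intersection counts sum to exactly $2|\gamma'\cap G'|+\nsfw(G)$, which gives $2\,\nsfw(G')\ge\nsfw(G)$ with no slack. (This still requires care when $\Gamma(\Lambda)$ is one-sided or when the $\delta_j$ lie in both capping disks.) Finally, the face-width inequality does not follow from your plan at all: there $\gamma'$ is merely non-contractible, possibly separating, in $\Sigma'$, so the homological bookkeeping above does not apply verbatim, and since $\nsfw(G)$ may exceed $\fw(G)$, producing a non-contractible curve of weight about $|\gamma'\cap G'|+\tfrac12\nsfw(G)$ proves nothing about $\fw(G)$; that case needs its own homotopy-based argument.
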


Let $G$ be a graph embedded on $\Sigma$ and let $p\in\Sigma\setminus G$ be a preselected point on the surface. If $C$ is a surface-separating cycle of $G$, we denote by $\Int(C)$ the subgraph of $G$ contained in the part of the surface separated by $C$ that contains $p$; in particular, $C\subseteq \Int(C)$.
Let $f\in F(G)$ be a face of $G$. We define subgraphs $B_0(f), B_1(f),B_2(f),\dots$ of $G$
recursively as follows: $B_0(f)=f$, and for $k \geq 1$, $B_k(f)$ is the union of $B_{k-1}(f)$ and
all facial walks that have a vertex in $B_{k-1}(f)$. Let $\partial B_k(f)$ be the set of
edges of $B_k(f)$ (together with their ends) that are not incident with a vertex of $B_{k-1}(f)$.
With this notation we have the following result (see \cite{Mo92}).

\begin{theorem}
\label{fw-lemma-1}
Let $G$ be a graph embedded on $\Sigma$ with $\nsfw(G) \geq 2$.
Let $f \in F(G)$ and let $k=\left\lfloor \tfrac{1}{2}\nsfw(G)\right\rfloor-1$. Then there exist pairwise
disjoint surface-separating cycles $C_0(f),\dots,C_k(f)$ such that for $i=0,\dots,k$, $C_i(f) \subseteq \partial B_i(f)$ and $B_i(f) \subseteq \Int(C_i(f))$ (where $\Int$ is defined with respect to a point $p$ in the face $f$). Moreover, if $l=\left\lfloor \tfrac{1}{2}\fw(G)\right\rfloor-1$, then the cycles $C_0,C_1,\dots,C_l$ are contractible in $\Sigma$.
\end{theorem}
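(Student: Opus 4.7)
The plan is to proceed by induction on $i$, building $C_i(f)$ as the outer topological boundary of an expanding closed region around the face $f$. For the base case $i=0$, the facial walk $B_0(f)$ of the open disk $f^\circ$ contains a simple cycle bounding $f^\circ$; this cycle is contractible, hence surface-separating, with $\Int$ on the $p$-side equal to $f$ itself.

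For the inductive step, assuming $C_{i-1}(f)$ has been constructed, I would let $R_i \subseteq \Sigma$ be the closed region obtained as the union of $\Int(C_{i-1}(f))$ (viewed as a subset of $\Sigma$) together with the closures of all open faces of $G$ incident to a vertex of $C_{i-1}(f)$. The recursive definition of $B_i(f)$ ensures that $R_i$ contains $B_i(f)$; moreover every edge on the topological boundary of $R_i$ has both endpoints outside $B_{i-1}(f)$, so this boundary is entirely contained in $\partial B_i(f)$.

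The crux, and the main obstacle, is to verify that the component of the topological boundary of $R_i$ separating $R_i$ from $\Sigma\setminus R_i$ on the non-$p$ side is a single simple cycle, which I then take as $C_i(f)$. Suppose for contradiction this fails: either that outer boundary has several components, or it has a \emph{pinch} vertex $v$ where the edges of $\partial R_i$ alternate more than twice between $R_i$ and its complement around $v$. In either case I would construct a closed curve $\gamma\subseteq\Sigma$ composed of two short face-chain arcs through $R_i$ which exploit the disconnection or the pinch in a topologically non-trivial way. Since every face of $B_i(f)$ is at face-distance at most $i$ from $f$, one can route such a $\gamma$ so that it crosses $G$ in fewer than $2(i+1)$ edges, and the way it uses the pinch (or links two outer boundary components through $R_i$) forces $\gamma$ to be surface non-separating. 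This contradicts $\nsfw(G)\ge 2(k+1)\ge 2(i+1)$. Therefore $\partial R_i$ does have a unique outer cycle, and the containment $B_i(f)\subseteq\Int(C_i(f))$ is immediate from the construction.

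The contractibility addendum for $i\le l$ follows by the same scheme with $\fw(G)$ in place of $\nsfw(G)$: if $C_i(f)$ failed to be contractible, the same construction could be arranged to produce a non-contractible curve $\gamma$ of length less than $2(l+1)$, which $\fw(G)\ge 2(l+1)$ forbids. The short-closed-curve constructions used throughout are in the spirit of Theorem~\ref{maintool} and rest ultimately on the $3$-path condition (Theorem~\ref{3-path-new}).
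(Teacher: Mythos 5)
First, a point of reference: the paper does not prove this theorem at all --- it is quoted from the literature (see \cite{Mo92}; the construction also appears in Mohar and Thomassen's book), so there is no in-paper proof to compare you against. Your overall strategy --- grow the region $R_i$ of faces within face-distance $i$ of $f$, take a boundary component as $C_i(f)$, and control that boundary via short closed curves through $R_i$ together with the width hypothesis --- is the standard one from that reference, and your count ($2i+1<2(i+1)\le \nsfw(G)$ for $i\le k$) is the right one.

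However, the step you yourself flag as the crux contains a genuine gap: it is not true that a pinch vertex $v$ of $\partial R_i$ (or the presence of several boundary components) forces the short closed curve $\gamma$ to be surface non-separating, so the intended contradiction with $\nsfw(G)\ge 2(i+1)$ is not available. Concretely, $B_i(f)$ can wrap around and nearly close up at $v$, trapping a pocket of faces at face-distance greater than $i$ from $f$; the curve built from two arcs reaching $v$ through the two fans of $R_i$-faces at $v$ is then contractible (it bounds the disk containing the pocket), and nothing is contradicted. The actual argument runs the implication the other way: since $\gamma$ meets $G$ in at most $2i+1$ points, the hypothesis forces $\gamma$ to be surface-separating (resp.\ contractible when $i\le l$), and one uses this to \emph{repair} the boundary --- absorb the trapped pocket into the region, equivalently shortcut the boundary walk at $v$ --- and iterates until a simple cycle with $B_i(f)$ entirely on the $p$-side remains. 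Pinches are not excluded; they are removed. The same issue affects multiple boundary components: these are not contradictory (for non-spherical $\Sigma$, the complement of $R_i$ is typically disconnected), and it is unclear what closed curve meeting $G$ fewer than $2(i+1)$ times would ``link'' two of them, since returning through $R_i$ doubles the count and returning through the complement is uncontrolled; what must be shown instead is that one particular boundary cycle has all of $B_i(f)$ on its $p$-side. Two smaller omissions: with only $\nsfw(G)\ge 2$ assumed, the facial walk of $f$ need not contain a cycle bounding $f^\circ$, so this degenerate case needs handling; and the asserted pairwise disjointness of $C_0,\dots,C_k$ should at least be observed to follow from $C_i\subseteq\partial B_i(f)$, whose edges are not incident with $B_{i-1}(f)\supseteq C_{i-1}(f)$.
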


In addition to having large $\nsfw(G)$, we will also need $\fw(G)$ to be large. This will be made possible by the following theorem.

\begin{theorem}
\label{nsfw and large fw}
Let\/ $G$ be a graph embedded in a surface $\Sigma$ with $k = \nsfw(G)\geq 6$.
Then $G$ contains a minor $G'$ such that $G'$ is 3-connected and has an embedding in a surface $\Sigma'$ with\/ $\nsfw(G') = k$ and\/ $\fw(G')\ge 6$.
\end{theorem}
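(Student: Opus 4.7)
The plan is to induct on the Euler genus of $\Sigma$, combined with a preliminary reduction to the 3-connected case. By standard minor operations on the ``planar'' side (i.e.\ the side lying in a disk region of $\Sigma$) of each vertex cut of $G$ of order at most two, I may assume $G$ is 3-connected without altering $\Sigma$ or decreasing $\nsfw(G)$. If $\fw(G)\ge 6$ already, set $(G',\Sigma'):=(G,\Sigma)$ and stop.

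Otherwise $\fw(G)<6\le\nsfw(G)=k$, so by definition of $\fw$ there is a simple closed non-contractible curve $\gamma$ on $\Sigma$ with $|\gamma\cap G|<6$; the hypothesis $\nsfw(G)\ge 6$ forces $\gamma$ to be surface-separating. Choose such a $\gamma$ of minimum intersection with $G$. Cut $\Sigma$ along $\gamma$ and cap the two resulting cuffs with disks to obtain closed surfaces $\Sigma_1,\Sigma_2$; since $\gamma$ is non-contractible, neither component $D_1,D_2$ of $\Sigma\setminus\gamma$ is a disk, so both $\Sigma_i$ are non-spherical with strictly smaller Euler genus than $\Sigma$. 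Let $G_i:=G\cap\overline{D_i}$, a subgraph of $G$ that embeds naturally in $\Sigma_i$.

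The topological heart of the argument is to show $\nsfw(G_1)\ge k$. Given any non-separating simple closed curve $\delta$ in $\Sigma_1$, the arcs $\delta\cap\overline{D_1}$ can be closed up by arcs in $\overline{D_2}\subset\Sigma$ to yield a simple closed curve $\tilde\delta\subset\Sigma$; a Mayer--Vietoris computation on the decompositions $\Sigma=\overline{D_1}\cup\overline{D_2}$ and $\Sigma_1=\overline{D_1}\cup\text{cap}$ shows that $\tilde\delta$ is non-separating in $\Sigma$ (independent of whether $\gamma$ is null-homologous in $\overline{D_2}$). Choosing the closing arcs in $\overline{D_2}$ to minimize intersections with $G_2$ and exploiting the minimality of $|\gamma\cap G|$ among surface-separating non-contractible curves, one verifies $|\delta\cap G_1|\ge|\tilde\delta\cap G|\ge k$.

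Applying the induction hypothesis to $(G_1,\Sigma_1)$, whose Euler genus is strictly smaller, yields a 3-connected minor $G'$ of $G_1\leqm G$ in some $\Sigma'$ with $\fw(G')\ge 6$ and $\nsfw(G')\ge k$. The main obstacle is the careful topological accounting in the previous paragraph, which must ensure that the closing arcs in $\overline{D_2}$ do not contribute spurious crossings beyond what is already counted in $|\delta\cap G_1|$; this is exactly where the minimality of $\gamma$ is used. A secondary point is verifying that the 3-connectivity reduction preserves $\nsfw$ (by routing non-separating curves around contracted planar regions), and enforcing the exact equality $\nsfw(G')=k$ rather than $\ge k$, which can be arranged by further edge deletions preserving 3-connectedness and $\fw(G')\ge 6$.
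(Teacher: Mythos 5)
The central gap is your claim that $\nsfw(G_1)\ge k$, and specifically the inequality $|\delta\cap G_1|\ge|\tilde\delta\cap G|$, which goes the wrong way. The completed curve $\tilde\delta$ contains all of $\delta\cap\overline{D_1}$ (which is where all of $\delta\cap G_1$ lives, since the cap carries no graph) \emph{plus} the closing arcs in $\overline{D_2}$, so in fact $|\tilde\delta\cap G|\ge|\delta\cap G_1|$, the excess being the number of times the closing arcs meet $G_2$. That excess cannot be controlled: if $\delta$ passes through the capped disk --- which is a face of $G_1$ in $\Sigma_1$, so a curve realizing $\nsfw(G_1)$ is free to use it --- then its closure must traverse $\overline{D_2}$, where it may pick up arbitrarily many crossings with $G_2$. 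Minimality of $|\gamma\cap G|$ among separating non-contractible curves says nothing about how $G_2$ sits inside $\overline{D_2}$, so $\nsfw(G_1)$ can genuinely drop below $k$; discarding $G_2$ wholesale loses exactly the information needed. (Compare Theorem~\ref{newfacewidth}: even when cutting along a \emph{shortest} curve one only retains half of the width, for precisely this reason.)

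The paper avoids this by not attempting to preserve $\nsfw$ under cutting at all. It takes $G'$ to be a minor of $G$ that is minimal (over all surfaces) subject to admitting an embedding with $\nsfw=k$, cuts along a shortest non-contractible (hence, as you also note, separating) closed face-chain $\Lambda$ of length $l\le5$, and --- crucially --- replaces the discarded side by a single apex vertex in the capped disk joined to the $l$ vertices of $X(\Lambda)$; this apex arises by contracting a component of $G_2'\setminus X$, so the resulting graph $G_1''$ is a \emph{proper minor} of $G'$. Minimality then forces $\nsfw(G_1'')<k$, and the witnessing short non-separating face-chain lifts back to $G'$ with the same length (any passage through the capped disk is re-routed through the $\le 5$ faces of $\Lambda$), contradicting $\nsfw(G')=k$. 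The apex vertex is what blocks cheap non-separating curves through the cap, and the minimal-minor framework is what delivers the exact equality $\nsfw(G')=k$ and the 3-connectivity at the end --- two points your sketch also leaves unresolved, since deleting edges to force $\nsfw(G')=k$ need not preserve 3-connectivity or $\fw(G')\ge6$.
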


\begin{proof}
Let $G'$ be a minor of $G$ with the minimum number of vertices and edges such that $G'$ has an embedding in a surface $\Sigma'$ with\/ $\nsfw(G') = k$. Clearly, $G'$ exists. We claim that $\fw(G')\ge 6$. If not, let $1\le l\le 5$ be the smallest integer such that there exists a closed face-chain $\Lambda = x_0,F_0,x_1, \dots, F_{l-1},x_0$ with $\Gamma(\Lambda)$ non-contractible. Since $k\ge 6$, $\Gamma(\Lambda)$ is surface-separating. Let $\Sigma_1'$ and $\Sigma_2'$ be the surfaces obtained by cutting $\Sigma'$ along $\Gamma(\Lambda)$ and capping off the resulting cuff. For $i=1,2$, let $G_i'$ be the subgraph of $G'$ in $\Sigma_i'$, and let $G_i''$ be obtained from $G_i'$ by adding a vertex of degree $l$ and joining it to all vertices in $X=\{x_0,\dots,x_{l-1}\}$ (and embedding the vertex and these edges into the capped disk). Each face $F_j$ ($0\le j<l$) determines a face $F^i_j$ in $\Sigma_i'$. This correspondence makes it possible to convert every face-chain in $G_1''$ to a face-chain in $G'$. Note that $G_1''$ is a proper minor of $G'$ (since $l$ is smallest, $G_2'\setminus X$ contains a connected component adjacent to all vertices in $X$ and can thus be contracted into the added vertex of $G_1''$). By the minimality of $G'$, we conclude that the embedding of $G_1''$ in $\Sigma_1'$ has $\nsfw(G_1'')<k$. Let $\Lambda'$ be a non-separating closed face-chain of length $k' < k$ confirming this fact. It is easy to see that $\Lambda'$ determines a non-separating face-chain in $G'$ of the same length (since $l\le 5$). This contradiction proves that $\fw(G')\ge 6$.

Finally, since $\fw(G')\ge 3$, $G'$ contains a 3-connected minor whose face-width and non-separating face-width are the same (\cite{mohar}). By the minimality of $G'$, this minor is equal to $G'$. This completes the proof.
\end{proof}

\section{Disjoint paths on a surface}
\label{sec:DisjointPathsOnASurface}

Let $G$ be a graph embedded on $\Sigma$. Let $C_1,C_2 \subseteq G$ be disjoint, homologous, surface non-separating cycles in $G$. Note that $C_1$ and $C_2$ are 2-sided since pairs of 1-sided homologous cycles always intersect each other.
Let $\Sigma_0$ and $\Sigma_1$ be bordered surfaces, whose cuffs coincide with $C_1$ and $C_2$, where $\Sigma_0 \cup \Sigma_1 = \Sigma$, $\Sigma_0 \cap \Sigma_1 = C_1 \cup C_2$.
Similarly, we can write $G=G_0\cup G_1$, where $G_i$ is the subgraph of $G$ embedded in $\Sigma_i$ for $i=0,1$, and thus
$G_0 \cap G_1 = C_1 \cup C_2$.
For $i=0,1$, we denote by $\Sigma_i'$ the closed surface obtained from the bordered surface $\Sigma_i$ by capping off the two cuffs of $\Sigma_i$.
With this notation we have the following:

\begin{lemma}
\label{lem:disjoint-paths}
Each of\/ $G_0$ and\/ $G_1$ contains $\nsfw(G)$ pairwise disjoint $(C_1,C_2)$-paths.
\end{lemma}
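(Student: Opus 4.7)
The plan is to prove the statement for $G_0$ (the case of $G_1$ being symmetric) by contradiction. Assume $G_0$ contains at most $k-1$ pairwise disjoint $(C_1,C_2)$-paths, where $k:=\nsfw(G)$. I will construct a non-separating simple closed curve $\gamma$ in $\Sigma$ that meets $G$ in fewer than $k$ points, violating the definition of $\nsfw(G)$. The construction has three stages: a Menger-theoretic reduction to a small vertex cut in a capped version of $G_0$, a topological conversion of that cut into a short closed curve inside $\Sigma_0$, and a homological verification that the resulting curve is non-separating in $\Sigma$.

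For the first stage, I would cap off the two cuffs of $\Sigma_0$ to form the closed surface $\Sigma_0'$ and enlarge $G_0$ to a graph $H$ embedded in $\Sigma_0'$ by adding two new vertices $v_1,v_2$, one in each cap, with $v_i$ joined by edges (drawn inside the cap) to every vertex of $C_i$. A short-cutting argument identifies the internally disjoint $(v_1,v_2)$-paths in $H$ with pairwise disjoint $(C_1,C_2)$-paths in $G_0$, so there are at most $k-1$ of them. Since $v_1,v_2$ are non-adjacent, Menger's theorem produces a vertex cut $T\subseteq V(H)\setminus\{v_1,v_2\}=V(G_0)$ with $|T|\le k-1$ whose removal disconnects $v_1$ from $v_2$ in $H$.

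For the second stage, I would obtain $\gamma$ as the boundary, in $\Sigma_0'$, of a sufficiently small regular neighborhood of the union of the $v_1$-component of $H-T$ with all edges of $H$ joining it to $T$. This neighborhood boundary consists of simple closed curves, meets $H$ only at vertices of $T$, and at least one of its components $\gamma$ separates $v_1$ from $v_2$. If $\gamma$ enters the interior of a cap, I would reroute it along the corresponding $C_i$ to arrange $\gamma\subseteq\Sigma_0\subseteq\Sigma$ without creating new intersections with $H$. Then $\gamma$ is a simple closed curve in $\Sigma$ meeting $G$ in at most $|T|<k$ points. This is the step I expect to be the main obstacle: one must verify that the chosen boundary component really is a single simple closed curve that separates $v_1$ from $v_2$ while meeting $H$ exactly in (a subset of) $T$; replacing $T$ by a minimum cut and inspecting the combinatorics of cut vertices relative to the faces of $H$ should make this rigorous.

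For the final stage, I would show $\gamma$ is non-separating in $\Sigma$. In $\Sigma_0'$ the curve $\gamma$ splits the surface into two open sides, one containing $v_1$ and hence the entire cap around $C_1$ (so also $C_1$), the other containing $v_2$ and $C_2$. Restricting to $\Sigma_0$ gives two open pieces, one with $C_1$ in its closure and the other with $C_2$. Since $C_1,C_2$ are non-separating and homologous, $\Sigma_1$ is connected (otherwise some component of $\Sigma_1$ would have boundary exactly $C_1$ or exactly $C_2$, forcing that cycle to be zero-homologous in $\Sigma$). Re-gluing $\Sigma_1$ along $C_1\cup C_2$ therefore merges the two pieces into a single component of $\Sigma\setminus\gamma$. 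Hence $\gamma$ is non-separating, contradicting $\nsfw(G)=k$ and completing the proof.
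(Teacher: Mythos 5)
Your overall architecture coincides with the paper's proof of Lemma \ref{lem:disjoint-paths}: cap the cuffs, add apex vertices $v_1,v_2$ joined to $C_1,C_2$, apply Menger to get a cut $T$ of size $<\nsfw(G)$, convert $T$ into a simple closed curve meeting $G$ only in $T$, and show that curve is non-separating in $\Sigma$ because the two sides reconnect through the other bordered piece. Your first and third stages are essentially the paper's (the third-stage reconnection argument is exactly how the paper finishes). The problem is that your second stage is not a proof but a restatement of the difficulty, and it is precisely the content that the paper isolates as Lemma \ref{bohmemoharcarsten}.

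Two concrete issues. First, the boundary of a regular neighborhood of the $v_1$-component together with its edges to $T$ does not meet $H$ ``only at vertices of $T$'': it crosses edges of $H$ transversally, one crossing per edge from $\hat C_1$ to $T$ (or per edge leaving $T$, depending on whether you include the vertices of $T$ in the neighborhood), and the number of such crossings can greatly exceed $|T|$. To get down to at most $|T|$ intersection points you must reroute the curve through the vertices of $T$; when a vertex $t\in T$ has several incident edges crossed by the curve, this rerouting destroys simplicity at $t$, and one must then extract from the resulting non-simple curve a single simple closed curve that still has the required separation or homology property. That extraction is the nontrivial step: the paper does it by minimizing the number of edge-crossings and using the 3-path condition (Theorem \ref{3-path-new}) to show that one of the two simple pieces obtained at a self-intersection can replace the original curve. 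Your phrase ``inspecting the combinatorics of cut vertices relative to the faces of $H$ should make this rigorous'' is exactly the gap. Second, the assertion that ``at least one of its components $\gamma$ separates $v_1$ from $v_2$'' is false as stated: a boundary component of the neighborhood may be non-separating in $\Sigma_0'$ (e.g.\ when the neighborhood is an annulus going around a handle), and it is possible that no single component separates $v_1$ from $v_2$. The correct conclusion is the dichotomy of Lemma \ref{bohmemoharcarsten}: some component is either non-separating in $\Sigma_0'$ or separates $v_1$ from $v_2$. The non-separating alternative is in fact the easy case for your purposes (such a curve lies in $\Sigma_0$ and remains non-separating in $\Sigma$), but your final stage only treats the separating case, so as written the argument does not close.
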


For the proof of Lemma \ref{lem:disjoint-paths}, we need the following result whose weaker form for contractible curves has appeared in \cite{BMT}.
%Although the proof is essentially the same as that given in \cite{BMT}, we include it here for the sake of completeness.

\begin{lemma}
\label{bohmemoharcarsten}
Let $G$ be a  graph embedded on a surface $\Sigma$, and let $A$ (possibly $A=\emptyset$) be a set of vertices such that $G'=G-A$ is disconnected. Let $\hat{C}_1$ and $\hat{C}_2$ be distinct connected components of\/ $G'$. Then $\Sigma$ contains a simple closed curve $\Gamma$ such that\/ $\Gamma \cap G \subseteq A$ and if\/ $\Gamma$ is surface separating, then $\hat{C}_1$ and $\hat{C}_2$ are contained in different connected components of\/ $\Sigma \setminus \Gamma$.
\end{lemma}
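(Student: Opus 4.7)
My plan is to construct $\Gamma$ as a simple closed curve extracted from the topological boundary of a suitable regular neighborhood of $\hat C_1$, carefully routed so as to pass through $A$-vertices rather than crossing edges of $G$.

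First, to avoid the awkwardness that a single face of $G$ may contain boundary vertices of both $\hat C_1$ and $\hat C_2$ through intervening vertices of $A$, I would perform a topological surgery: for each $a\in A$ cut out a small open disk $D_a^{\circ}$ around $a$, obtaining the bordered surface $\Sigma^{\ast} = \Sigma\setminus\bigcup_{a\in A}D_a^{\circ}$. In $\Sigma^{\ast}$ the former edges at $A$ become ``half-edges'' attached to the cuffs $\partial D_a$, and the components of $G-A$ together with these half-edges give pairwise disjoint compact sets $\hat C_1^+,\hat C_2^+,\dots\subset\Sigma^{\ast}$. A sufficiently small metric regular neighborhood $\bar N^{\ast}$ of $\hat C_1^+$ in $\Sigma^{\ast}$ is disjoint from $\hat C_j^+$ for $j\neq 1$ and meets $G-A$ only in $\hat C_1^+$; its topological frontier $\partial\bar N^{\ast}$ is a $1$-manifold in $\Sigma^{\ast}$, disjoint from $G-A$, with boundary consisting of pairs of points flanking each half-edge attachment on each cuff $\partial D_a$.

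Second, I would close $\partial\bar N^{\ast}$ into $\Sigma$ by adjoining, inside each $D_a$, an arc joining each pair of boundary points of $\partial\bar N^{\ast}$ on $\partial D_a$. Because any such pair flanks the intersection of $\partial D_a$ with an edge of $G$ incident with $a$, the connecting arc in $D_a$ must pass through the vertex $a$ itself in order not to cross any edge of $G$. This gives a $1$-complex $\Lambda\subset\Sigma$ that meets $G$ only in (some) vertices of $A$. If $\Lambda$ visits a vertex $a\in A$ more than once, I would resolve the singularity by re-pairing the arc-ends at $a$ (a purely combinatorial modification using the cyclic order of edges at $a$ provided by the embedding), splitting $\Lambda$ at $a$ into pieces that each visit $a$ just once. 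The outcome is a family $\Gamma_1,\dots,\Gamma_m$ of simple closed curves in $\Sigma$, each satisfying $\Gamma_i\cap G\subseteq A$, collectively bounding a closed region $\bar N\subset\Sigma$ with $\hat C_1\subset\bar N$ and $\hat C_2\cap\bar N=\emptyset$.

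Third, if some $\Gamma_i$ is surface non-separating, take $\Gamma=\Gamma_i$ and we are done (there is no separation condition to verify). Otherwise every $\Gamma_i$ is separating. Let $U$ be the component of $\Sigma\setminus\bar N$ containing $\hat C_2$ and let $\Gamma_{i_1},\dots,\Gamma_{i_r}$ be the $\Gamma_i$'s in $\partial U$. Cutting $\Sigma$ along the separating curve $\Gamma_{i_1}$ yields two components $P_1,P_2$ with $\bar N\subset\bar P_1$ and $\bar U\subset\bar P_2$ (since $\Gamma_{i_1}$ locally separates the interior of $\bar N$ from that of $U$). If $r\ge 2$, then any $\Gamma_{i_\ell}$ with $\ell\ge 2$ lies in $\bar N\cap\bar U\subset\bar P_1\cap\bar P_2=\Gamma_{i_1}$, forcing $\Gamma_{i_\ell}=\Gamma_{i_1}$, a contradiction. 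Hence $r=1$ and $\Gamma_{i_1}$ separates $\hat C_1\subset\bar N$ from $\hat C_2\subset U$; take $\Gamma=\Gamma_{i_1}$.

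The main obstacle is the re-pairing procedure in the second step: one must show that the arcs inside the $D_a$'s can always be consistently re-paired so as to split all self-intersections at $A$-vertices while still producing a disjoint union of simple closed curves bounding a valid region $\bar N$ (containing $\hat C_1$ and excluding $\hat C_2$). This is a combinatorial argument using the cyclic orders of edges at the $A$-vertices and the topology of a regular neighborhood of a $1$-complex in a surface; once this is carried out, the case analysis of the third step is a clean application of the fact that a separating simple closed curve in a surface locally separates its two sides.
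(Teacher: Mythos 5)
You have correctly located where the difficulty of this lemma lives, but you have not resolved it: essentially all of the content is concentrated in your second step, namely the claim that the arc-ends accumulating at a vertex $a\in A$ can be consistently re-paired into non-crossing arcs through $a$ so that the closed-up frontier decomposes into \emph{simple} closed curves, each of which is still a genuine boundary curve of a region $\bar N$ with $\hat C_1$ in its interior and $\hat C_2\cap\bar N=\emptyset$. As written this is an assertion, not a proof, and you say so yourself. Note moreover that when $a$ sends two or more edges to $\hat C_1$, every connecting arc in $D_a$ is forced through the single point $a$, so the curves $\Gamma_1,\dots,\Gamma_m$ cannot be made disjoint and $\bar N$ acquires singular boundary points at vertices of $A$; your third step then quietly uses that $\bar N$ is a surface with boundary whose boundary components are exactly the $\Gamma_i$ (for instance in ``$\Gamma_{i_1}$ locally separates the interior of $\bar N$ from that of $U$'' and in ``$\Gamma_{i_\ell}\subseteq\bar N\cap\bar U$''), and this is precisely what the unproved re-pairing is supposed to deliver. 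So the $r=1$ argument of step three, which is the right idea, currently rests on an unestablished structural claim about $\bar N$.

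For comparison, the paper sidesteps the global re-pairing by handling one singularity at a time. It first produces a simple closed curve with the required separation behaviour by adding an auxiliary edge $e$ joining $\hat C_1$ to $\hat C_2$ and tracing, near $\hat C_1$, the facial walk of this cut-edge: the curve crosses $e$ exactly once, and this odd intersection number yields the separation property for free. It then considers all such curves that are additionally allowed to pass through vertices of $A$, takes one minimizing the number of crossings with edges joining $\hat C_1$ to $A$, pushes a single crossing onto its endpoint in $A$, observes that by minimality the modified curve cannot be simple and therefore splits at that vertex into two simple closed curves, and shows that one of the two can replace the original. That minimality argument is exactly the substitute for your missing combinatorial lemma. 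If you wish to keep the ``all at once'' regular-neighbourhood formulation, you must supply the local re-pairing argument at each $a\in A$, including the verification that each resulting curve is simple and bounds $\bar N$ locally on one side; until then the proof is incomplete at its central step.
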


\begin{proof}
Consider the disconnected graph $G'$ with its induced embedding on $\Sigma$. We claim that
\begin{enumerate}
\item [\rm(1)] $\Sigma \setminus G'$ contains a 2-sided simple closed curve $\Gamma$ that intersects $G$ only in edges
 joining $\hat{C}_1$ with $A$, and $\Gamma$ is either surface non-separating in $\Sigma$, or separates $\Sigma$ into two components, one containing $\hat{C}_1$ and the other one containing $\hat{C}_2$.
\end{enumerate}
To see this, let us first delete all components of $G'$ distinct from $\hat{C}_1$ and $\hat{C}_2$. Next, let us add an edge $e$ joining a vertex in $\hat{C}_1$ with a vertex in $\hat{C}_2$ so that the resulting graph $G'' = \hat{C}_1\cup \hat{C}_2 +e$ is embedded in $\Sigma$.
Since $e$ is a cut-edge of $G''$, the unique facial walk $F$ containing $e$ in the induced embedding of $G''$ contains $e$ twice and $e$ is traversed in opposite directions. Following the part of this facial walk in $\hat{C}_1$, we see that $\Sigma$ contains a simple closed curve $\Gamma$ that follows the boundary of $F$ close to $\hat{C}_1$ so that $\Gamma$ crosses $e$ exactly once, and $\Gamma$ intersects only $e$ and the edges of $G$ joining $\hat{C}_1$ with $A$. In particular, $\Gamma$ does not intersect any of the removed components of $G'$. If $\Gamma$ separates $\Sigma$, then each component of $\Sigma\setminus \Gamma$ contains exactly one of the components $\hat{C}_1$ or $\hat{C}_2$ since the edge $e$ crosses $\Gamma$. This proves~(1).

Let us consider all simple closed curves satisfying the conclusion of (1), except that we allow them to intersect $G$ not only at interior points of the edges joining $\hat{C}_1$ with $A$, but also allow that $\Gamma$ passes through vertices in $A$. Among all such curves,
choose $\Gamma \subseteq \Sigma$ having minimum number of crossings with interior points on the edges joining $\hat{C}_1$ with $A$. Note that $\Gamma$ intersects $G$ only in $A$ or in edges joining  $A$ to vertices in $\hat{C}_1$. By possibly altering $\Gamma$, we may assume that each intersection of $\Gamma$ with an edge of $G$ is a crossing.

If $\Gamma \cap E(G) = \emptyset$, then $\Gamma$ is of the desired form and the claim follows. Hence $\Gamma$ intersects an edge $a=uv \in E(G)$, where $u\in V(\hat{C}_1)$ and $v \in A$. Replace a short segment of $\Gamma$ around this intersection with a simple curve which follows $a$ to its endvertex $v$ in $A$, crosses through $v$ and returns back on the other side of $a$ (if $\Gamma$ intersects $a$ in more than one point, choose the intersection point which is closest to $v$). The resulting curve $\Gamma'$ is homotopic to $\Gamma$. By the minimality property of $\Gamma$, $\Gamma'$ is not simple, and is hence composed of two simple closed curves $\Gamma_1$ and $\Gamma_2$ that intersect at $v$.

We may assume that both $\Gamma_1$ and $\Gamma_2$ separate $\Sigma$, for if  $\Gamma_i$ ($1 \leq i \leq 2$)  does not separate $\Sigma$, then $\Gamma_i$ can be chosen instead of $\Gamma$, contradicting the minimality of $\Gamma$.
By our choice of $\Gamma$ and $a$, it is easy to see that there must exist $i\in \{1,2\}$ such that cutting $\Sigma$ along $\Gamma_i$ disconnects $\Sigma$ into two components each containing exactly one component $\hat{C}_1$ or $\hat{C}_2$. But then  $\Gamma_i$ can be chosen instead of $\Gamma$, contradicting the minimality of $\Gamma$. This completes the proof.
\end{proof}

\begin{proof}[Proof of Lemma \ref{lem:disjoint-paths}]
By symmetry, it suffices to prove the lemma for $G_1$.
Let $r$ be the maximum number of disjoint $(C_1,C_2)$-paths contained in $G_1$. To prove the claim, we have to show that $r\ge\nsfw(G)$. By Menger's theorem there exists $A \subseteq V(G_1)$ with $|A|=r$ that separates $C_1$ and $C_2$.
Let $G_2\supseteq G_1$ be the graph embedded in $\Sigma_1'$ that is obtained from $G_1$ by adding two vertices $v_1,v_2$, where $v_i$ is adjacent to all vertices in $C_i$ ($i=1,2$). For $i=1,2$, let $\hat{C}_i$ be the connected component of $G_2-A$ containing $v_i$. Then $A$ satisfies assumptions of Lemma \ref{bohmemoharcarsten}.
Let $\Gamma$ be a simple closed curve on $\Sigma_1'$ as promised to exist by Lemma \ref{bohmemoharcarsten}.
If $\Gamma$ is surface-separating in $\Sigma_1'$, then it separates $v_1$ from $v_2$.
Moreover, $\Gamma \cap G_2 \subseteq A$ and we may assume that $\Gamma$ is disjoint from the interior of $\Sigma_0$.
However, in the surface $\Sigma$, $\Gamma$ is surface non-separating since the two components of $\Sigma_1'\setminus \Gamma$ are connected together in $\Sigma\setminus\Gamma$ through $\Sigma_0$. Thus, we conclude that $\Gamma$ is always surface non-separating. Therefore, $r\ge |\Gamma\cap G| = |\Gamma\cap G_2| \ge \nsfw(G)$.
\end{proof}

For a path $P$, we denote by $int(P)$ the path obtained from $P$ by removing its end-vertices (and incident edges).

\begin{theorem}
\label{thm:disjoint-paths-2}
Let $G,C_1,C_2$ and $G_1,\Sigma_0,\Sigma_1$ be as introduced at the beginning of the section.  Let $\mathcal{P}$ be a set of pairwise disjoint $(C_1,C_2)$-paths in $G_1$ of maximum cardinality. Let $i \in \{1,2\}$, and let $w,w' \in V(C_i)$ be two vertices of\/ $C_i$.
Let $X, \overline{X} \subseteq C_i$ be the two $(w,w')$-paths on $C_i$, i.e.,\ $C_i=X\cup \overline{X}$ and $X\cap \overline{X} = \{w,w'\}$. Then one of the following holds:
\begin{enumerate}
\item[\rm (a)] There exists an $(int(\overline{X}),\mathcal{P})$-path in $G_1$ disjoint from $X$.
(Here we consider $\mathcal{P}$ as a subgraph of $G$.)
\item[\rm (b)] There exist $v,u \in V(X)$ such that $v$ and $u$ are incident with a common face
$f \in F(G_1)$ in $\Sigma_1$, and the closed curve in $\Sigma$ formed by a simple arc in $f^\circ$ from $u$ to $v$ together with the segment $X[v,u]$ on $X$ is %non-contractible
surface non-separating in $\Sigma$.
\end{enumerate}
\end{theorem}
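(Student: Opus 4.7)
The plan is to prove the theorem by contrapositive: assuming (a) fails, I will construct the curve required by (b). Let $H = G_1 - V(X)$ and let $D$ be the union of those connected components of $H$ that meet $int(\overline X)$. The hypothesis that (a) fails immediately gives $V(\mathcal P)\cap D = \emptyset$; the maximality of $\mathcal P$ then forces $V(C_2)\cap D = \emptyset$ as well, since any path from $int(\overline X)$ to $V(C_2)$ inside $D$ would be vertex-disjoint from $V(X)\cup V(\mathcal P)$ and would enlarge $\mathcal P$, contradicting its maximality.

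Second, I would produce a simple closed curve (or finite disjoint union of such) $\Gamma \subseteq \Sigma_1$ that meets $G_1$ only at vertices of $V(X)$ and that separates $D \cup int(\overline X)$ from $V(C_2) \cup (V(\mathcal P)\setminus V(X))$ within $\Sigma_1$. To obtain this, cap off both cuffs of $\Sigma_1$ to form the closed surface $\Sigma_1'$, and add vertices $v_1, v_2$ in the two cap disks adjacent to $V(C_1)$ and $V(C_2)$ respectively, yielding a graph $\tilde G$. The first paragraph shows that $V(X)$ separates $v_1$ from $v_2$ in $\tilde G$, so Lemma \ref{bohmemoharcarsten} supplies a simple closed curve in $\Sigma_1'$ meeting $\tilde G$ only at $V(X)$ and separating $v_1$ from $v_2$. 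Because $V(X) \cap V(C_2) = \emptyset$, this curve is disjoint from the $C_2$-cap; any excursion into the $C_1$-cap enters and exits through vertices of $V(X)$ and can be rerouted along a subpath of $X$ using the sequence of faces of $G_1$ adjacent to consecutive edges of that subpath in $\Sigma_1$.

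The topological crux is that this $\Gamma$ is non-separating in $\Sigma$ itself, not merely in $\Sigma_1'$. Since $\Gamma \cap C_2 = \emptyset$ and $\Gamma \cap C_1 \subseteq V(X)$ is finite, the open set $(C_1 \cup C_2)\setminus \Gamma$ contains all of $int(\overline X)$ (on the $D$-side of $\Gamma$) and all of $C_2$ (on the opposite side). Since $\Sigma_0$ is connected and glued to $\Sigma_1$ along $C_1 \cup C_2$, any point on one side of $\Gamma$ can be joined to any point on the other by a detour through $\Sigma_0$ that leaves $\Sigma_1$ along $int(\overline X)$ and reenters along $C_2$, never touching $\Gamma$. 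Hence $\Sigma \setminus \Gamma$ is connected, so $[\Gamma] \ne 0$ in $H_1(\Sigma;\mathbb Z/2)$.

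Finally, I decompose $\Gamma$ (treating each simple closed component separately) into arcs $\alpha_1, \dots, \alpha_k$ between consecutive visits $x_1, \dots, x_k$ to $V(X)$, each arc $\alpha_i$ contained in a single face $f_i$ of $G_1$, and set $\gamma_i := \alpha_i \cup X[x_{i+1}, x_i]$. As $\mathbb Z/2$ $1$-chains on $\Sigma$,
\[
\sum_i \gamma_i \;=\; \Gamma \;+\; \sum_i X[x_{i+1}, x_i],
\]
and the second sum vanishes because the cyclic sequence $x_1, \dots, x_k$ crosses every edge of $X$ (in the linear order $X$ induces on these vertices) an even number of times. Summing over all components yields $[\Gamma] = \sum_i [\gamma_i]$ in $H_1(\Sigma; \mathbb Z/2)$; since $[\Gamma]\ne 0$, at least one $\gamma_i$ is non-separating in $\Sigma$, which is exactly conclusion (b). The main obstacle I anticipate is the reroute in step two: verifying that the curve produced by Lemma \ref{bohmemoharcarsten} can indeed be realized inside $\Sigma_1$ as a simple closed curve (or disjoint union) meeting $G_1$ only in $V(X)$ while preserving the required separation between $D \cup int(\overline X)$ and the other vertex set.
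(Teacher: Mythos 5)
Your proposal is correct and follows the same overall route as the paper: both observe that once (a) fails, $V(X)$ separates the $int(\overline X)$-side from the $C_2$-side; both invoke Lemma \ref{bohmemoharcarsten} on a suitably augmented graph in the capped surface $\Sigma_1'$ to obtain a simple closed curve $\Gamma$ with $\Gamma\cap G_1\subseteq V(X)$; and both argue that $\Gamma$ is non-separating in $\Sigma$ because its two sides in $\Sigma_1'$ are re-joined through $\Sigma_0$. You diverge in the two technical steps. First, the paper augments $G_1$ by a single edge $e$ joining the ends of $int(\overline X)$, drawn in the $C_1$-cap, which is designed to keep $\Gamma$ out of that cap; you add apex vertices $v_1,v_2$ instead, so your $\Gamma$ may excurse into the $C_1$-cap. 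The obstacle you flag there is not serious: since $\Gamma$ meets the augmented graph only in $V(X)$, any excursion is trapped in a single triangular sector $v_1uu'$ and runs from $u$ to $u'$ with $uu'\in E(X)$ (this uses $int(\overline X)\ne\emptyset$), so the loop $\alpha_i\cup uu'$ is contractible inside that sector and contributes nothing to your homology sum---no rerouting is needed at all. Second, to pass from ``$\Gamma$ is non-separating'' to ``some single face-arc plus $X$-segment is non-separating,'' the paper runs an extremal exchange argument (minimize the number of arcs of $\Gamma_1\cap\Sigma_1$ and apply the 3-path condition to a single arc), whereas you compute directly in $H_1(\Sigma;\mathbb{Z}/2)$ that $[\Gamma]=\sum_i[\gamma_i]$ because the $X$-segments telescope to zero mod $2$; this is a cleaner one-shot version of the same idea, since the homological 3-path condition is exactly additivity of $\mathbb{Z}/2$-classes. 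The only genuine omission is the degenerate case $int(\overline X)=\emptyset$, which the paper dispatches separately (outcome (b) holds with $\{u,v\}=\{w,w'\}$) and which your classification of cap excursions implicitly needs to exclude.
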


\begin{proof}
By symmetry we may assume that $i=1$. We may also assume that $int(\overline{X})\ne \emptyset$ since otherwise (b) holds with $\{v,u\} = \{w,w'\}$. Moreover, no path in $\mathcal P$ has an end in $int(\overline{X})$ since otherwise (a) holds.
We will assume that (a) fails, and show that (b) holds. In particular, we will show that there exists a simple arc $\gamma$ in $\Sigma_1$, so that $\gamma \cap G_1 = \{v,u\}$, where $v,u \in V(X)$, and $\gamma \cup X[v,u]$ is a surface non-separating closed curve. This will imply (b).

So, suppose (a) does not hold.
The maximality of $|\mathcal{P}|$ and the assumption that (a) does not hold, imply that in $G_1-V(X)$, there is no $(int(\overline{X}), \mathcal{P}\cup C_2)$-path. Hence, $G_1-V(X)$ is disconnected, with $C_2$ and $int(\overline{X})$ belonging to distinct connected components.

Let $G_2$ (embedded on $\Sigma_1'$) be obtained from $G_1-V(X)$ by adding an edge $e$ (embedded along the deleted path $X$, but drawn inside the capped face in $\Sigma_1'$ so that it does not intersect $G_1$) connecting the two end vertices of $int(\overline{X})$. Note that $G_2$ has the same connected components as $G_1-V(X)$, since $e$ connects two vertices of $int(\overline{X})$ that are in the same component of $G_1-V(X)$.
Let $F_1$ be the face of $G_2$ in $\Sigma_1'$ bounded by the cycle $C_1'=int(\overline X) + e$.

Clearly, $C_1'$ is a cycle in $G_2$ which is homotopic to $C_1$ in $\Sigma$. Let $\hat{C}_1'$ and  $\hat{C}_2$ be the connected components of $G_2$ containing $C_1'$ and $C_2$, respectively. Let $\Gamma \subseteq \Sigma_1'$ be the closed curve obtained by applying Lemma \ref{bohmemoharcarsten} to the embedded graph $G_1+e\subseteq \Sigma_1'$, the separating vertex set $V(X)$ playing the role of $A$,
and considering the connected components $\hat{C}_1'$ and $\hat{C}_2$ of $G_2 = (G_1+e)-V(X)$. Then $\Gamma \cap int(F_1) =\emptyset$ since $\Gamma\cap \hat{C}_1' = \emptyset$. In particular, $\Gamma \subseteq \Sigma_1$ and $\Gamma\cap G_1\subseteq V(X)$.
We claim that

\begin{itemize}
	\item [(1)] $\Gamma$ is surface non-separating in $\Sigma$.
\end{itemize}
This is clear if $\Gamma$ is surface non-separating in $\Sigma_1'$. Otherwise, $\Gamma$ is surface separating in $\Sigma_1'$. As guaranteed by the use of Lemma \ref{bohmemoharcarsten}, $\Gamma$ separates $\hat{C}_1'$ from $\hat{C}_2$ in $\Sigma_1'$. However, in $\Sigma$, these two parts are connected together via the surface part $\Sigma_0$, so $\Gamma$ is not surface separating in $\Sigma$. This proves (1).

In the sequel we will consider curves in $\Sigma_1'\setminus int(F_1)$. We can view $\Sigma_1 \subset \Sigma_1'\setminus int(F_1) \subset \Sigma$ and therefore talk about homology properties of such curves in $\Sigma$.

Let $\Gamma_1$ be a closed curve in $\Sigma_1' \setminus int(F_1)$ so that the following conditions hold:
\begin{itemize}
	\item [(i)] $\Gamma_1 \cap (G_1+e) \subseteq V(X)$, $\Gamma_1$ is surface non-separating in $\Sigma$, and every arc $\gamma \subseteq \Gamma$ with ends $x,y \in V(X)$ and which is otherwise disjoint from $\Sigma_1$ is homotopic to $X[x,y]$;
	\item [(ii)] subject to (i), the number of connected components of $\Gamma_1\cap \Sigma_1$ is minimum.
\end{itemize}

Note that such a choice of $\Gamma_1$ is possible since $\Gamma$ satisfies (i).

The curve, $\Gamma_1$ is surface non-separating in $\Sigma$. By (i) we deduce that there exists an arc $\gamma \subseteq \Gamma_1$ contained in $\Sigma_1$ such that $\gamma\cap G_1 =\{x,y\}$ where $x,y \in V(X)$. Let $\gamma'$ be a curve in $\Sigma_1'\setminus \Sigma_1$ along $X$ with ends $x$ and $y$ that is homotopic to $X[x,y]$.

If $\gamma \cup \gamma'$ is surface non-separating in $\Sigma$, then (b) holds. Otherwise, by replacing $\gamma$ in $\Gamma_1$ with $\gamma'$, we obtain a new curve $\Gamma_2$ that satisfies (i), by the 3-path condition. But then the existence of $\Gamma_2$ contradicts (ii) in the choice of $\Gamma_1$. This contradiction concludes the proof.
\end{proof}

The following result is a well-known corollary of Menger's theorem.

\begin{lemma}
\label{lem:cylinder}
Let\/ $\Sigma$ be a cylinder and let $F_1$ and $F_2$ be the two cuffs. Let $G$ be a graph embedded on $\Sigma$ and suppose that for $i=1,2$, $S_i:=F_i \cap G \subseteq V(G)$. Let $r \geq 0$ be an integer. Suppose that every simple closed curve $\Gamma$ with $\Gamma \cap G \subseteq V(G)$ and $|\Gamma \cap V(G)|<r$ is contractible in $\Sigma$. Then there are $r$ pairwise disjoint $(S_1,S_2)$-paths in $G$.
\end{lemma}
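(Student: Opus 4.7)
The plan is to prove the contrapositive: if $G$ admits fewer than $r$ pairwise disjoint $(S_1,S_2)$-paths, I will produce a non-contractible simple closed curve $\Gamma\subseteq\Sigma$ with $\Gamma\cap G\subseteq V(G)$ and $|\Gamma\cap V(G)|<r$, contradicting the hypothesis. The strategy is to cap the cylinder to a sphere, apply Menger's theorem together with Lemma \ref{bohmemoharcarsten} there, and then homotope the resulting curve back to the cylinder.

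First, applying Menger's theorem to $(S_1,S_2)$-paths in $G$ yields a vertex cut $A\subseteq V(G)$ with $|A|<r$ separating $S_1$ from $S_2$ in $G$. Next, cap off both cuffs of $\Sigma$ with disks $D_1,D_2$ to obtain a sphere $\Sigma'$, and inside each $D_i$ add a new vertex $v_i$ joined by edges (drawn in $D_i$) to every vertex of $S_i$; call the resulting embedded graph $G'\subseteq\Sigma'$. Any $(v_1,v_2)$-path in $G'$ contains a subpath from some $s\in S_1$ to some $s'\in S_2$ inside $G$, so $A$ also separates $v_1$ from $v_2$ in $G'$. Let $\hat{C}_1,\hat{C}_2$ be the components of $G'-A$ containing $v_1$ and $v_2$, and apply Lemma \ref{bohmemoharcarsten} to obtain a simple closed curve $\Gamma'\subseteq\Sigma'$ with $\Gamma'\cap G'\subseteq A$. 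Since $\Sigma'$ is a sphere, $\Gamma'$ is surface separating, and the conclusion of the lemma forces $v_1$ and $v_2$ to lie in different components of $\Sigma'\setminus\Gamma'$.

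The final step is to push $\Gamma'$ back into $\Sigma$ without introducing new intersections with $G$. Because $\Gamma'\cap G'\subseteq A$, the curve crosses no edge of $G'$, so each connected component of $\Gamma'\cap D_i$ is an arc trapped inside a single sector of the star at $v_i$, namely the triangular region bounded by two consecutive star edges $v_is,v_is'$ and the arc of $F_i$ from $s$ to $s'$. Each such sector is contained in a face of $G'$ obtained by extending a face $f$ of $G$ adjacent to this $F_i$-arc; therefore each arc can be rerouted through the open face $f\subseteq\Sigma$ between its two endpoints on $F_i$, without meeting any vertex or edge of $G$. Doing this simultaneously for all arcs of $\Gamma'\cap(D_1\cup D_2)$, drawing arcs that share a sector as pairwise disjoint chords of $f$ in the cyclic order of their endpoints along $F_i$, yields a simple closed curve $\Gamma\subseteq\Sigma$ with $\Gamma\cap G\subseteq A$ that is homotopic to $\Gamma'$ in $\Sigma'\setminus\{v_1,v_2\}$. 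Hence $\Gamma$ separates $F_1$ from $F_2$ on the cylinder, and since every contractible simple closed curve on a cylinder bounds a disk and cannot separate the two cuffs, $\Gamma$ is non-contractible. Combined with $|\Gamma\cap V(G)|\leq|A|<r$, this contradicts the hypothesis.

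The main technical obstacle is this last push-back: one has to verify that each arc of $\Gamma'\cap D_i$ really is confined to a single star-sector, and that the replacement chords inside the faces of $G$ can be chosen pairwise disjoint without picking up any vertex of $S_i\setminus A$ along $F_i$. This is planar bookkeeping using the cyclic order of endpoints on each arc of $F_i$ between consecutive vertices of $S_i$; once it is carried out, all remaining steps reduce to standard facts about curves on a sphere and on a cylinder.
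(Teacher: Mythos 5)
Your proof is correct. The paper gives no proof of this lemma (it is dismissed as a well-known corollary of Menger's theorem), but your argument --- Menger's theorem to obtain a separator $A$ with $|A|<r$, capping the cuffs with apex vertices $v_1,v_2$, applying Lemma~\ref{bohmemoharcarsten} on the resulting sphere, and isotoping the separating curve off the capping disks into the faces of $G$ incident with the cuffs --- is precisely the cylinder specialization of the paper's own proof of Lemma~\ref{lem:disjoint-paths}, so it follows the intended approach.
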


%%%%%%%%%%%%%%%%%%%%%%%%%%%%%%%%%%%%%%%%%%%%%%%%%%%%%%%%%%%%%%%%%%%%%%%%%%%%%%%%%

\section{A grid on a cylinder}
\label{sec:AGridOnACylinder}

Let $C$ be a cycle and let $S \subseteq V(C)$ be a subset of its vertices. For $x,y \in V(C)$, let $A$ and $B$ be the two components (possibly empty) of $C-\{x,y\}$. We define the \emph{distance} between $x$ and $y$ on $C$ \emph{with respect to} $S$, denoted by ${\rm dist}_{(C,S)}(x,y)$  to be
$\min \{|V(A)\cap S|,|V(B)\cap S|\}$.

\begin{theorem}
\label{k6-on-c}
Let $G$  be a graph embedded in a cylinder, and suppose that $G$ has three pairwise disjoint homotopic cycles $C_1,C_2,C_3$, such that\/ $C_1$ and\/ $C_3$ coincide with the cuffs of the cylinder. Let $k \geq 7 $ be an integer and let $P_0,\dots,P_{k-1}$ be pairwise disjoint $(C_1,C_3)$-paths in $G$  such that for every $0 \leq i\leq k-1$ and\/ $1 \leq j \leq 3$, the intersection of $P_i$ and $C_j$ is a single vertex.
%\begin{itemize}
%	\item[$\circ$] For $i=0,\dots,k-1$, $P_i$ is an $(s_i,t_i)$-path, $s_i\in V(C_1)$ and $t_i \in V(C_3)$.
	%\item[$\circ$] The vertices $s_0,s_1,\dots,s_{k-1}$ (resp., $t_0,t_1,\dots,t_{k-1}$) appear on $C_1$ (resp., on $C_3$) in clockwise order.
%	\item[$\circ$] For $0 \leq i\leq k-1$, $1 \leq j \leq 3$, the intersection of $P_i$ and $C_j$ is a single vertex.
%\end{itemize}
For\/ $i=0,\dots,k-1$, let $s_i$ and $t_i$ be the ends of $P_i$ on $C_1$ and $C_3$, respectively.
Set\/ $S:= \{s_0,\dots,s_{k-1}\}$ and $T: = \{t_0,\dots,t_{k-1}\}$.
Let $a_1,a_2 \in V(C_1)$ and $b_1,b_2 \in V(C_3)$ such that $b_1 \neq b_2$ and\/
${\rm dist}_{(C_1,S)}(a_1,a_2) \geq 2$. Then the following holds:
\begin{enumerate}
	\item[\rm(i)] If\/ ${\rm dist}_{(C_3,T)}(b_1,b_2) \geq 1$, then $G'=G + a_1b_1 +a_2b_2$ contains a $K_6$-minor.
	\item[\rm(ii)] If\/ ${\rm dist}_{(C_3,T)}(b_1,b_2) =0$, and there exist vertices $a_3 \in V(C_1) $ and $b_3 \in V(C_3) $,  such that ${\rm dist}_{(C_3,T)}(b_1,b_3) \geq 1$ or ${\rm dist}_{(C_3,T)}(b_2,b_3) \geq 1$, then $G'=G + a_1b_1 +a_2b_2+ a_3b_3$ contains a $K_6$-minor.
\end{enumerate}
\end{theorem}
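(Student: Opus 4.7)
The plan is to construct a $K_6$-minor in $G'$ explicitly, as the union of six pairwise vertex-disjoint connected branch sets with at least one edge between every pair. The construction uses three ingredients supplied by the hypothesis: the $k \ge 7$ vertical paths $P_i$, the three horizontal cycles $C_1, C_2, C_3$ providing top/middle/bottom connectivity on the cylinder, and the added edges $a_1b_1, a_2b_2$ (plus $a_3b_3$ in case (ii)) which supply the non-planar adjacencies. Since the paths $P_0, \ldots, P_{k-1}$ are pairwise disjoint in the annulus bounded by $C_1$ and $C_3$, their endpoints appear in the same cyclic order on $C_1$ and on $C_3$. Relabel so that this common cyclic order is $P_0, P_1, \ldots, P_{k-1}$, and by a cyclic shift assume $a_1$ lies on the arc of $C_1$ from $s_{k-1}$ to $s_0$ that contains no other $s_i$. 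Let $a_2$ lie on $C_1[s_m, s_{m+1}]$; the hypothesis $\mathrm{dist}_{(C_1,S)}(a_1,a_2)\ge 2$ forces $1 \le m \le k-3$, and by reflection I may assume $m \le (k-2)/2$. Record the analogous positions of $b_1,b_2$ on $C_3$ relative to the $t_i$.

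The six branch sets are built as follows. Two \emph{diagonal} branch sets carry the added edges: $B_5$ contains $a_1$, $b_1$, the edge $a_1b_1$, and short arcs of $C_1$ and $C_3$ through $a_1$ and $b_1$; similarly $B_6$ contains $a_2$, $b_2$, the edge $a_2b_2$, and short arcs through $a_2,b_2$. The other four branch sets $B_1, B_2, B_3, B_4$ are built around four of the paths $P_i$, chosen so that their endpoints on $C_1$ and $C_3$ neighbor $a_1,a_2,b_1,b_2$ appropriately. The middle cycle $C_2$ is partitioned into four arcs, one absorbed into each of $B_1, \ldots, B_4$, which guarantees all $\binom{4}{2}=6$ adjacencies among $B_1,\ldots,B_4$ directly through $C_2$, even between those that are far apart along $C_1$ or $C_3$.

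Verifying the remaining nine adjacencies proceeds by inspection. The adjacency $B_5$--$B_6$ is supplied by the arc of $C_1$ between $a_1$ and $a_2$ on one side; the distance-$\ge 2$ hypothesis guarantees this arc is vertex-disjoint from the arcs used by the other branch sets. Each pair $B_5$--$B_i$ and $B_6$--$B_i$ for $i \le 4$ is handled by an endpoint of the core path of $B_i$ lying in, or adjacent to, the $C_1$- or $C_3$-arc absorbed into $B_5$ or $B_6$; the two adjacencies $B_5$--$B_4$ and $B_6$--$B_3$, which would be blocked on the planar cylinder, are precisely the adjacencies realized by the added edges $a_1b_1$ and $a_2b_2$ meeting $B_4$ on $C_3$ and $B_3$ on $C_3$ respectively. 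In case (ii), one side of $C_3$ between $b_1$ and $b_2$ contains no $t$-vertex, eliminating a $C_3$-route used above; the third edge $a_3b_3$ (whose distance condition ensures it reaches a $t$-vertex on the ``good'' side) reroutes the lost adjacency through $C_1$, with analogous branch-set adjustments.

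The main technical obstacle is the case analysis dictated by the position parameter $m$ and by how the arc of $C_3$ between $b_1$ and $b_2$ interlaces with the projections of $\{s_0,\ldots,s_m\}$ and $\{s_{m+1},\ldots,s_{k-1}\}$ through the paths $P_i$. The bound $k \ge 7$ is used essentially: after reserving at most three path-endpoints near the corner branch sets $B_5,B_6$ to accommodate $a_1,a_2,b_1,b_2$, at least four paths remain to serve as cores of $B_1,\ldots,B_4$, and within these four there must still be enough separation on $C_1$ and $C_3$ to realize all six internal $K_4$-adjacencies through the $C_2$-backbone.
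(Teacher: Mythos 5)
Your overall strategy---building six branch sets from four (or more) of the paths $P_i$ and the three cycles, with $C_2$ as a backbone guaranteeing the six mutual adjacencies among the four path-based sets, and the added edges supplying the crossing adjacencies---is the same strategy the paper uses (there the targets are five explicit gadget graphs $\Delta_1,\dots,\Delta_5$, each shown to contain a $K_6$-minor). But your writeup defers exactly the part that constitutes the proof. The phrase ``verifying the remaining nine adjacencies proceeds by inspection'' hides a genuine case analysis that you name but never carry out, and some of the assertions you do make are not correct as stated. One concrete problem: the configuration depends not only on which arcs of $C_3$ (relative to the projections of $a_1,a_2$ through the paths) contain $b_1$ and $b_2$, but also on the \emph{orientation}, i.e.\ whether $a_1$ is joined to the member of $\{b_1,b_2\}$ aligned with it or to the other one. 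In the paper these two orientations require different gadgets ($\Delta_1$ with four paths versus $\Delta_2$ with five), and a single branch-set template cannot cover both; in particular your claim that the $B_5$--$B_6$ adjacency is realized by an arc of $C_1$ between $a_1$ and $a_2$ that is disjoint from the other branch sets fails whenever the cores of $B_1,\dots,B_4$ must be chosen among paths whose $C_1$-endpoints lie on that arc, which is forced in some configurations.

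More seriously, your treatment of part (ii) is a single sentence. When ${\rm dist}_{(C_3,T)}(b_1,b_2)=0$, the hypothesis places no constraint at all on the position of $a_3$ on $C_1$, and only a weak constraint on $b_3$, which may still be within distance $1$ of one of $b_1,b_2$. The hard subcase is when $b_1,b_2$ are consecutive in $T$ and sit between the projections of $a_1$ and $a_2$ with the ``crossed'' orientation, and $b_3$ is also close to $b_1,b_2$: there the paper needs six-path gadgets $\Delta_3,\Delta_4,\Delta_5$, a reduction to part (i) with the roles of $C_1$ and $C_3$ interchanged when $b_3$ is far away, and, when $b_3\in\{t_0,t_3,t_4,t_{k-1}\}$ (in its normalization), an explicit verification for each of the six possible positions of $a_3$ among $s_0,\dots,s_5$. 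Nothing in your sketch addresses the unconstrained $a_3$, and ``analogous branch-set adjustments'' does not substitute for this check. To turn your outline into a proof you would need to (a) enumerate the cases by the positions and orientation of $b_1,b_2$ relative to the two arcs determined by $a_1,a_2$, (b) in each case name the specific paths used as cores and the specific arcs of $C_1,C_2,C_3$ assigned to each branch set, and (c) carry out the full subcase analysis for (ii). That enumeration is precisely the content of the paper's proof, so as it stands your argument has a genuine gap rather than being a complete alternative.
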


\begin{figure}[htb]
    \begin{center}
     \scalebox{0.7}{\includegraphics{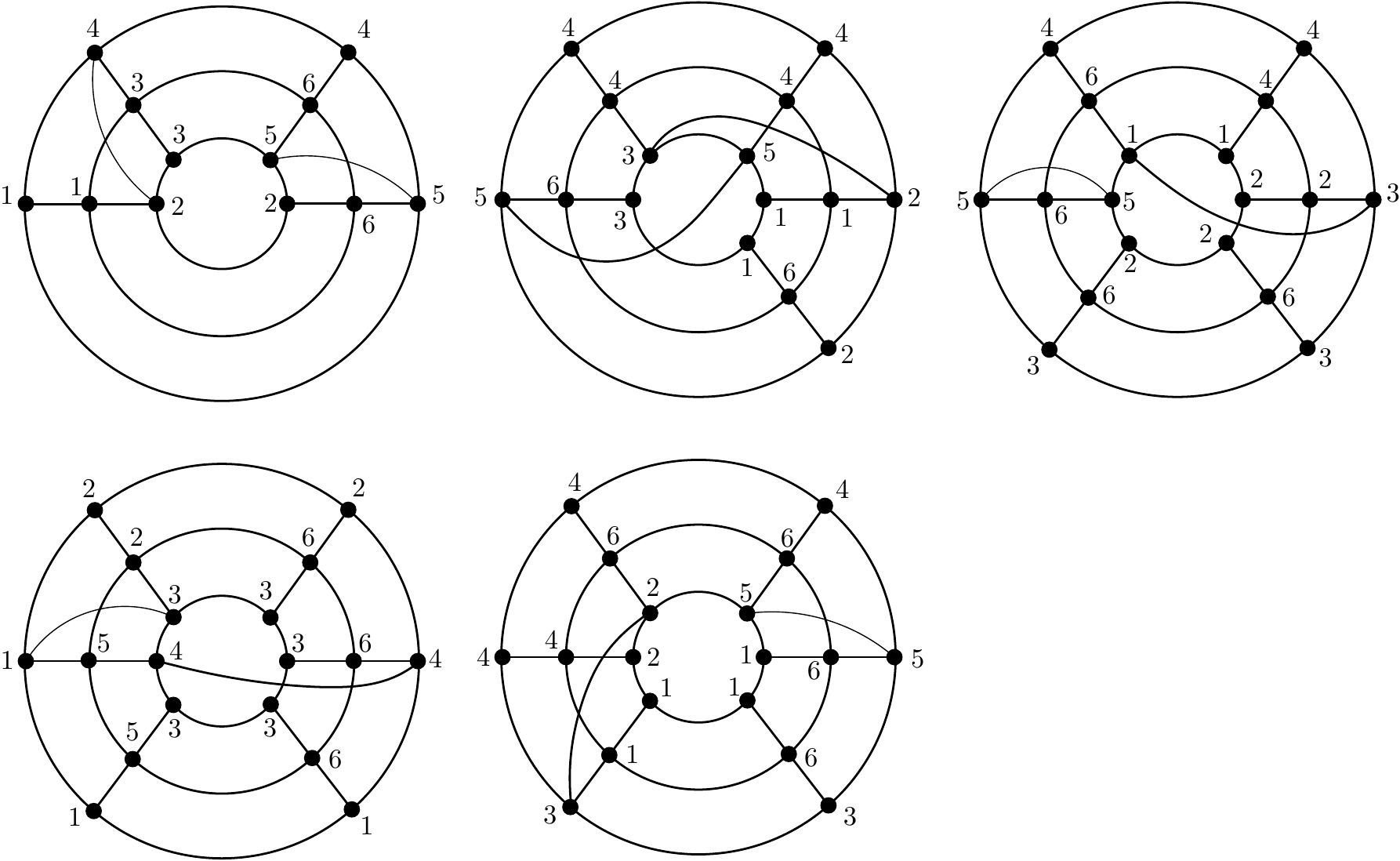}}
     \caption{The graphs $\Delta_1,\dots,\Delta_5$}
     \label{fig:fig1}
     \end{center}
\end{figure}

\begin{proof}
Throughout the proof all indices are taken modulo $k$. We introduce the graphs $\Delta_1,\dots,\Delta_5$ which are depicted in Figure~\ref{fig:fig1}. Each $\Delta_i$ contains a $K_6$-minor as is evident by the labelling of the vertices in the figure. Thus, it suffices to prove that one of these graphs is a minor of $G'$.

By relabelling the paths $P_i$ ($0 \leq i <k$), we may assume that the vertices $s_0,s_1,\dots,s_{k-1}$ (resp., $t_0,t_1,\dots,t_{k-1}$) appear on $C_1$ (resp., on $C_3$) in this cyclic order, and we consider the corresponding orientation of $C_1$ and $C_3$ as \emph{clockwise} orientation.
For $ i,j \in \{0,\dots,k-1\}$, let $C_1[s_i,s_j]$ (resp., $C_3[t_i,t_j]$) be the $(s_i,s_j)$-path (resp., $(t_i,t_j)$-path) on $C_1$ (resp., $C_3$) in the clockwise direction on the cycle. Set $\mathcal{C}:=C_1\cup C_2 \cup C_3 \subseteq G$.

By replacing $G$ with a minor of $G$, we may assume that $G$ is the union of the cycles $C_1,C_2,C_3$ and the paths $P_0,\dots,P_{k-1}$. Moreover, any contraction of an edge on $C_1$ or $C_3$ either identifies two vertices in $S \cup T$,  violates one of  distance assumptions on $a_1,a_2,b_1,b_2,b_3$, or identifies $b_1$ and $b_2$.

As a consequence, by (possibly) relabelling the paths, we may assume that the following conditions are satisfied:

\begin{enumerate}
\item[(1)] $a_1=s_0$, $a_2=s_j$ ($ 3 \leq j \leq k-3$).
\item[(2)] There are indices $0 \leq \ell < r \leq k-1$ such that $b_1=t_{\ell}$ and $b_2=t_{r}$ or $b_1=t_r$ and $b_2=t_{\ell}$.
\item[(3)] If (ii) holds, then $a_3 \in S$ and $b_3 \in T$.
\end{enumerate}

\noindent
\emph{Proof of} (i). We proceed according to three cases.

\smallskip

\textbf{Case 1.} Suppose that $ \ell,r  \in \{1,\dots,j-1\}$ or  $ \ell,r  \in \{j+1,\dots,k-1\}$. By symmetry (i.e. after possibly changing the labelling of the paths to the reverse cyclic labelling), we may assume the former. Since $r - \ell >1$, $s_{\ell+1} \neq s_r$.

If $b_1=t_{\ell}$ and $b_2=t_{r}$, then  $\Delta_1 \leqm \mathcal{C} \cup P_{j+1} \cup P_{\ell+1} \cup P_r \cup P_j \cup \{a_1b_1,a_2b_2\}$. To see this, consider the outer cycle of $\Delta_1$ to correspond to $C_1$ and the inner-most cycle to correspond to $C_3$. The four paths shown correspond in clockwise order, starting on the left,\footnote{We will stick with similar assumptions in the remaining cases: $C_1$ and $C_3$ will correspond to the outer and inner cycle, respectively, and the order of the paths in $\Delta_i$ will correspond to the listed order, starting on the left and continuing clockwise.}
to $P_{j+1},P_{\ell+1},P_r$ and $P_j$, and the two crossed edge are obtained by contracting $C_1(a_1,s_{\ell+1})$ and $C_3(t_{j+1},b_1)$.

If $b_1=t_{r}$ and $b_2=t_{\ell}$ then $\Delta_2 \leqm \mathcal{C} \cup P_0 \cup P_{\ell} \cup P_r \cup P_j \cup P_{j+1} \cup \{a_1b_1,a_2b_2\}$.

\smallskip

\textbf{Case 2.} Suppose $\{r,\ell\} \cap \{0,j\} \neq \emptyset$. By symmetry we may assume that $\ell=0$ and $ 2 \leq r \leq j$.

Suppose first that $r<j$. If $b_1=t_0$ and $b_2=t_r$, then $\Delta_1 \leqm \mathcal{C} \cup P_{j+1} \cup P_1 \cup P_r \cup P_j \cup \{a_1b_1,a_2b_2\}$ (this is obtained after contracting $C_1(a_1,s_1)$ and $C_3(t_{j+1},b_1)$). If $b_1=t_r$ and $b_2=t_0$, then $\Delta_2 \leqm \mathcal{C} \cup P_{k-1} \cup  P_1 \cup P_r \cup P_{j} \cup P_{j+1} \cup \{a_1b_1,a_2b_2\}$ (after contracting $C_1(s_{k-1},a_1)$ and $C_3(b_2,t_1))$.

Suppose now that $r=j$. Since $k\ge7$, we may assume by symmetry that $j\le k-4$. If $b_1=t_0$ and $b_2=t_j$ then $\Delta_1 \leqm \mathcal{C} \cup P_{k-1} \cup P_1 \cup P_{j-1} \cup P_{j+1} \cup \{a_1b_1,a_2b_2\}$.
If $b_1=t_j$ and $b_2=t_0$, then
$\Delta_2 \leqm \mathcal{C} \cup P_{k-1} \cup P_1 \cup P_{j-1} \cup P_{j+1} \cup P_{j+2} \cup \{a_1b_1,a_2b_2\}$
(we contract $C_1(s_{k-1},a_1)$, $C_1(a_2,s_{j+1})$, $C_3(b_2,s_1)$ and $C_3(t_{j-1},b_1)$.

\smallskip

\textbf{Case 3.} Suppose that $1 \leq \ell \leq j-1$ and $j+1 \leq r \leq k-1$. Then $\Delta_1 \leqm \mathcal{C} \cup P_r \cup P_0 \cup P_{\ell} \cup P_j \cup \{a_1b_1,a_2b_2\}$.

\bigskip

\noindent
\emph{Proof of} (ii). We have $r=\ell+1$ (since by assumption ${\rm dist}_{(C_3,T)}(b_1,b_2)=0$ and $b_1\ne b_2$). By symmetry, we may assume that $0 \leq \ell \leq j-1$.

\smallskip

\textbf{Case 1.} Suppose that $\ell=0$ or $\ell=j-1$. By symmetry we may assume that $\ell=0$ and then $r=1$.
If $b_1=t_0$ and $b_2=t_1$ then $\Delta_3 \leqm \mathcal{C} \cup P_0 \cup P_1 \cup P_{j-1} \cup P_{j} \cup P_{j+1} \cup P_{k-1} \cup \{a_1b_1,a_2b_2\}$. If $b_1=t_1$ and $b_2=t_0$ then $\Delta_4 \leqm \mathcal{C} \cup P_0 \cup P_1 \cup P_{j-1} \cup P_{j} \cup P_{j+1} \cup P_{k-1} \cup \{a_1b_1,a_2b_2\}$.

\smallskip

\textbf{Case 2.} Suppose that $1 \leq \ell \leq j-2$.
If $b_1=t_{\ell+1}$ and $b_2=t_{\ell}$, then $\Delta_2 \leqm \mathcal{C} \cup P_0 \cup P_{\ell} \cup P_{\ell+1} \cup P_j \cup P_{j+1} \cup \{a_1b_1,a_2b_2\}$.

Suppose now that $b_1=t_{\ell}$ and $b_2=t_{\ell+1}$. We may assume that $\ell=1$ and $\ell+2=j$. For if $\ell\neq 1$, then $\Delta_5 \leqm \mathcal{C} \cup P_1 \cup P_{\ell} \cup P_{\ell+1} \cup P_j \cup P_{j+1} \cup P_0 \cup \{a_1b_1,a_2b_2\}$, and the case when $\ell \neq j-2$ is symmetric to the case when $\ell\ne 1$.

\begin{figure}[htb]
    \begin{center}
     \scalebox{0.7}{\includegraphics{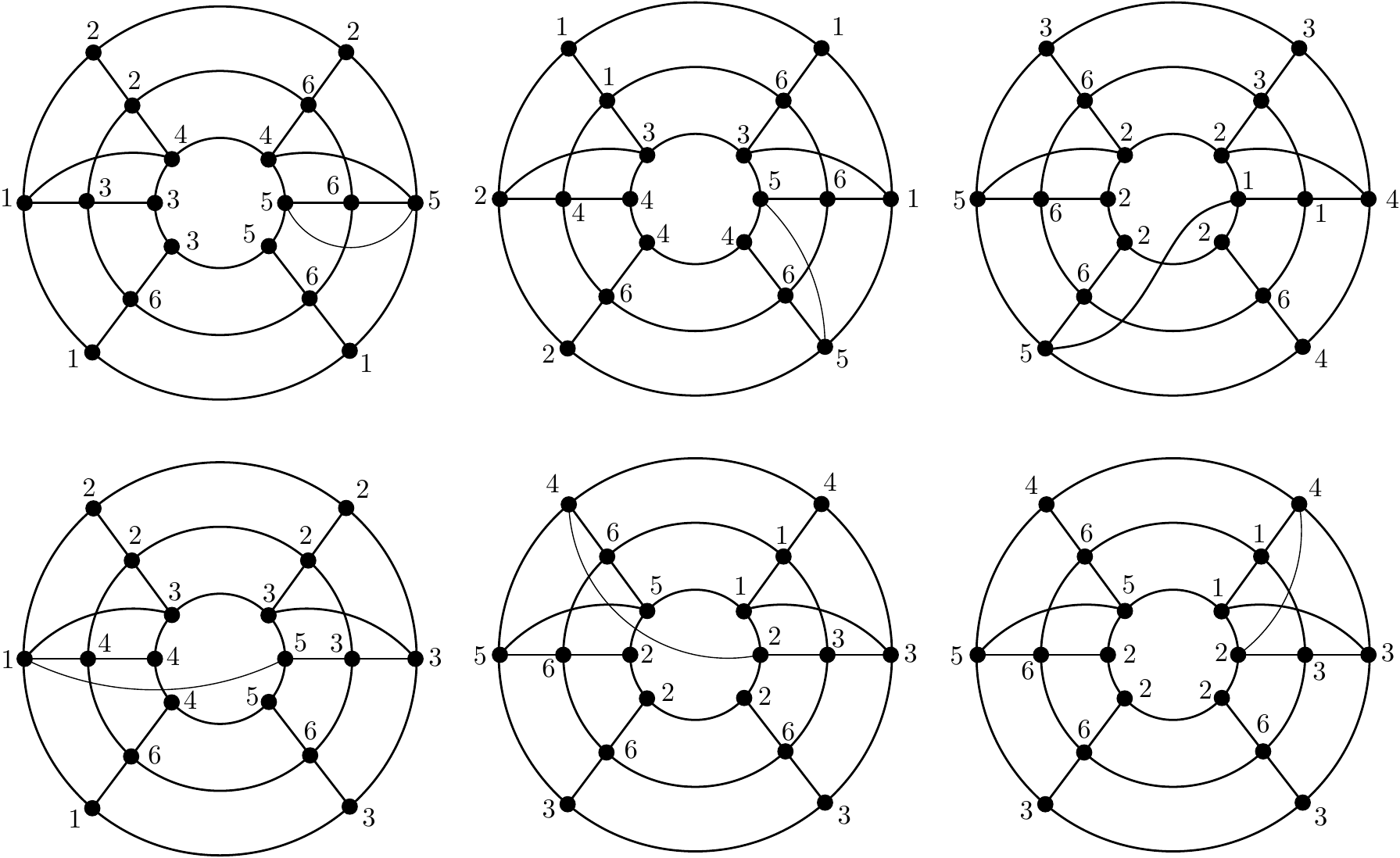}}
     \caption{The graphs obtained in Case~2 of proof of (ii) contain $K_6$-minors}
     \label{fig:fig2}
     \end{center}
\end{figure}

Hence, we are left with the case where $\ell=1$ and $j=3$.  By assumption, $b_3 \in T \setminus \{t_1,t_2\}$. Suppose that $b_3 \in \{t_5,\dots,t_{k-2}\}$. Then ${\rm dist}_{(C_3,T)}(b_3,b_1) \geq 2$ and ${\rm dist}_{(C_3,T)}(b_3,b_2)\geq 2$. In addition, since ${\rm dist}_{(C_1,S)}(a_1,a_2) \geq 2$, there is $z \in \{1,2\}$ such that ${\rm dist}_{(C_1,S)}(a_3,a_z) \geq 1$. Then the proof follows by  the proof of (i) by interchanging the roles of $C_1$ and $C_3$ and $S$ and $T$, with $b_3$ and $b_z$ playing the role of $a_1$ and $a_2$, and $a_3$ and $a_z$ playing the role of $b_1$ and $b_2$, respectively. Thus, we may assume that $b_3 \in \{t_0,t_3,t_4,t_{k-1}\}$.

Suppose that $b_3 \in \{t_0,t_3\}$. By symmetry, we may assume that $b_3=t_3$. Let $H:= \mathcal{C} \cup \{P_0,P_1,P_2,P_3,P_4,P_5\} \cup \{a_1b_1,a_2b_2,a_3b_3\}$.
By contracting edges on $C_1$, we obtain a minor $H'$ of $H$ such that $a_3 \in \{s_0,s_1,\dots,s_5\}$. For each of these six possibilities for $a_3$, we see that $H'$ contains a $K_6$-minor (see Figure~\ref{fig:fig2}).

Finally, suppose that $b_3 \in \{t_{4},t_{k-1}\}$. By symmetry, we may assume that $b_3 = t_{4}$.
Let $H:= \mathcal{C} \cup \{P_0,P_1,P_2,P_3,P_{k-2},P_{k-1}\} \cup \{a_1b_1,a_2b_2,a_3b_3\}$. Let $H'$ be obtained from $H$ by contracting $C_3(t_3,b_3)$. The proof now follows as in the previous paragraph.
\end{proof}

%============================
\section{The projective plane}

In this section we present the proof of Theorem \ref{thm:pp}.

The projective plane contains graphs of face-width 3 that do not contain $K_6$ as a minor. In fact the graphs obtained from $K_6$ by performing one or more $\Delta Y$-transformations\footnote{We say that a graph $H$ is obtained from a graph $G$ by a \emph{$\Delta Y$-transformation} if the edges of a triangle $T=uvw$ are removed from $G$ and replaced by a new vertex $y$ and three edges joining $y$ with each of $u,v,w$. The inverse operation is said to be a \emph{$Y\!\Delta$-transformation}.}
on facial triangles of $K_6$ provide such examples. On the other hand, face-width four forces $K_6$ minor as claimed by Theorem \ref{thm:pp}. In this section we give a proof of this theorem.

It suffices to prove Theorem \ref{thm:pp} for minor-minimal graphs embedded in the projective plane with face-width 4. It was proved by Randby \cite{Ra97} that every such graph can be obtained from the projective $4\times 4$ grid (the first graph depicted in Figure \ref{fig:4}) by a series of $Y\!\Delta$ and $\Delta Y$-transformations. Let ${\mathcal G}_4$ be the family of such graphs. It is known \cite{Ju95} that ${\mathcal G}_4$ contains precisely 270 graphs.

\begin{figure}[hbt]
    \begin{center}
     \scalebox{0.75}{\includegraphics{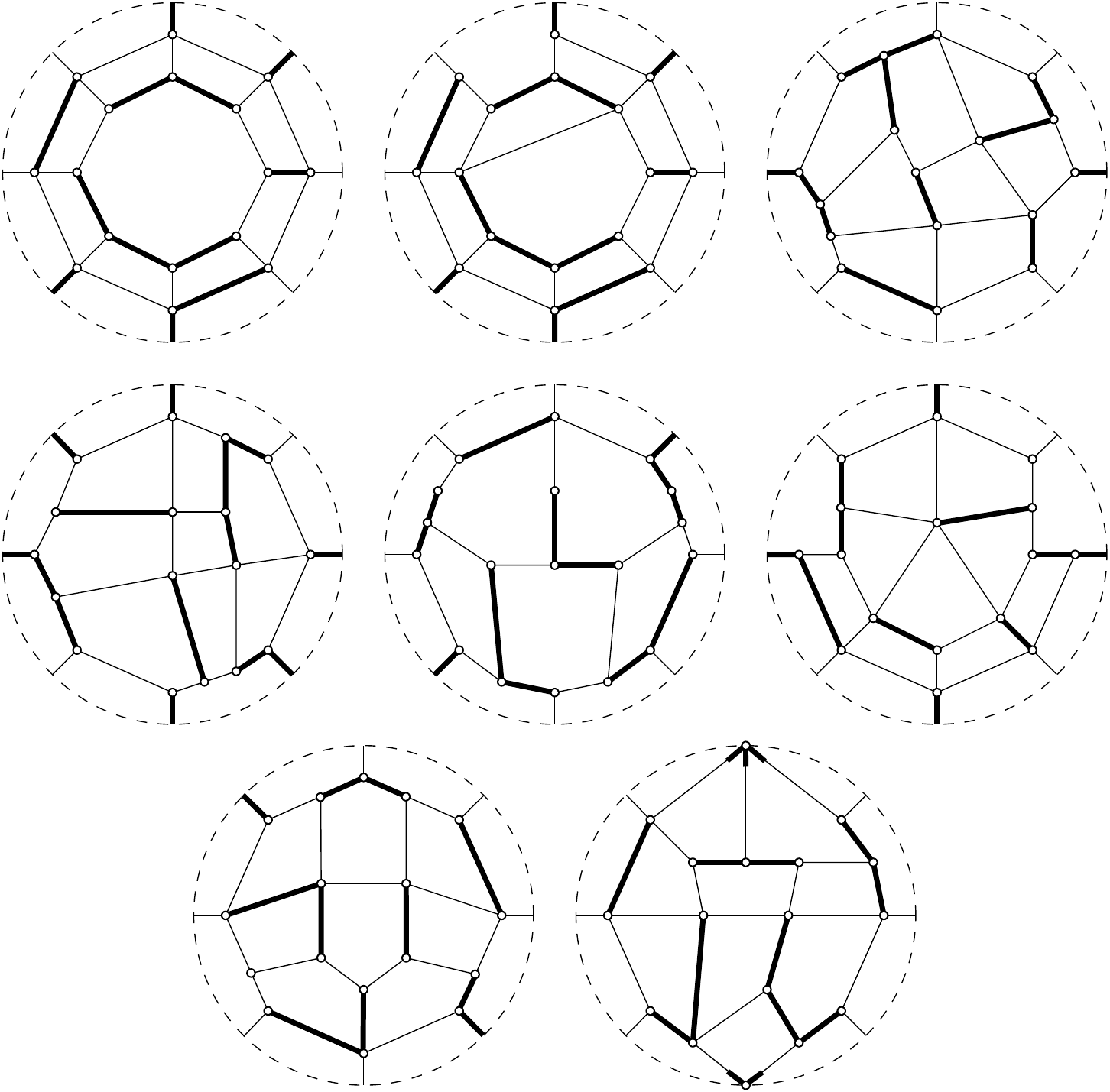}}
     \caption{The triangle-free graphs in ${\mathcal G}_4$ contain $K_6$ minors}
     \label{fig:4}
     \end{center}
\end{figure}

It is easy to see that if $G$ is obtained from $H$ by a $\Delta Y$-transformation and $G$ has a $K_6$ minor, then so does $H$. Therefore it suffices to prove that all triangle-free graphs in ${\mathcal G}_4$ contain $K_6$ as a minor. To justify this conclusion, note that graphs in ${\mathcal G}_4$ have face-width 4; thus every triangle in such a graph $G$ is facial in any embedding of $G$ on the projective plane. Also observe that every $\Delta Y$-transformation increases the number of vertices, thus performing these transformations as long as possible, we end up with a triangle-free graph.

Examining the 270 graphs in ${\mathcal G}_4$, we found that precisely eight of them are triangle-free. They are shown in Figure \ref{fig:4} (drawn in the projective plane), where also a $K_6$ minor is exhibited for each of them (the thick edges should be contracted in order to obtain a $K_6$ minor). This observation completes the proof of Theorem~\ref{thm:pp}.

%=================================
\section{Proof of the main result}
\label{sec:ProofOfMainResult}

In this section we prove Theorem \ref{thm:main}. Let $G$ and $\Sigma$ be as in the theorem.
By Theorem \ref{nsfw and large fw}, we may assume that $G$ is 3-connected and that $\fw(G)\ge6$. Let $\Lambda$ and $n$ be as in (H1). By Theorem~\ref{clean}, $\Lambda$ is clean. We shall also assume that $x_i \in V(G)$ ($i=0,\dots,n-1$), for if $x_i$ is an edge then we contract $x_i$ and work with the resulting minor of $G$. The only danger is that such contractions lower $\nsfw(G)$. However, this will not be a problem, since any further arguments involving large $\nsfw(G)$ will refer to the original graph.

Let $\Sigma$ be a cylinder with cuffs $F_1$ and $F_2$. Let $G$ be a graph embedded on $\Sigma$, and let $n\geq 2$ be an integer. Let $C_1,\dots,C_{n}$ be pairwise disjoint homotopic cycles in $G$ such that $F_1=C_1$, $F_2=C_n$  and $C_1,\dots,C_n$ appear along $\Sigma$ in order. For $i=1,\dots,n-1$, we say that $C_{i+1}$ is \emph{tight\/} in $G$ with respect to $C_i$, if there does not exist a $(C_{i+1})$-path $P$ that is disjoint from $C_{i+1}$ except for its two ends, $P$ is disjoint from $C_i$, and $P$ is embedded in the sub-cylinder of $\Sigma$ bounded by $C_i$ and $C_{i+1}$.

The proof proceeds according two two cases, depending whether $\Lambda$ is 2-sided or 1-sided.

\subsection{Proof of Theorem \ref{thm:main} when $\Lambda$ is 2-sided}
\label{sec:ProofOfRefLambdaIs2Sided}

As $\Gamma(\Lambda)$ is 2-sided, then as we traverse along $\Gamma(\Lambda)$ on $\Sigma$, one side is naturally the ``left-hand side'' and the other is the ``right-hand side''. The curve $\Gamma(\Lambda)$ splits each face $F_i$ into two closed disks. Each of these closed disks is bounded by the portion of $\Gamma(\Lambda)$ in $F_i$ and a part of the boundary of $F_i$. For $i=0,\dots,{n-1}$, let $\bnd_L(F_i)$ ($\bnd_R(F_i)$) be the portion of the boundary of $F_i$ to the left (right) of $\Gamma(\Lambda)$. Then each of $\bnd_L(F_i)$ and $\bnd_R(F_i)$ is a path in $G$ from $x_i$ to $x_{i+1}$ (indices modulo $n$). All these paths are pairwise disjoint except for their ends. Set $C_L(\Lambda):=\bigcup_{i=0}^{n-1} \bnd_L(F_i)$ and $C_R(\Lambda):=\bigcup_{i=0}^{n-1} \bnd_R(F_i)$. Since $\Lambda$ is clean, each of $C_L(\Lambda)$ and $C_R(\Lambda)$ is a cycle in $G$.

Cutting $\Sigma$ along $\Gamma(\Lambda)$, results in a new graph $G'$ embedded on $\Sigma'$, where $\Sigma'$ is the surface obtained from $\Sigma$ by cutting along $\Gamma(\Lambda)$ and capping off the resulting two cuffs. Let $F_L$ and $F_R$ be the two added faces of $G'$ whose boundaries coincide  with $C_L(\Lambda)$ and $C_R(\Lambda)$, respectively. By Theorem~\ref{newfacewidth}, we have $\nsfw(G')\geq 4$ and $\fw(G')\geq 3$, since $\nsfw(G)\ge 7$ and $\fw(G)\geq 6$. Let us now apply Theorem~\ref{fw-lemma-1} to $G'$ and its faces $F_L$ and $F_R$, respectively. Let $B_1(F_L)$, $C_1(F_L)$, $B_1(F_R)$ and $C_1(F_R)$ be the disks (cycles) as obtained by the application of Theorem~\ref{fw-lemma-1} and the facts that $\nsfw(G') \geq 4$ and $\fw(G')\geq 3$. Set $\Omega_R:=C_1(F_R)$ and $\Omega_L:=C_1(F_L)$. Note that $\Omega_L$ and $\Omega_R$ are homotopic to $\Gamma(\Lambda)$ and that they bound a cylinder containing $\Lambda$. By possibly altering $\Omega_R$ and $\Omega_L$, we may assume that the following holds:

\begin{lemma}
\label{tight}
In $G'$, the cycle $\Omega_L$ $($resp., $\Omega_R)$ is tight with respect to $F_L$ $($resp., $F_R)$, and\/ $\Omega_L \subseteq B_1(F_L)$ $($resp., $\Omega_R \subseteq B_1(F_R))$.
\end{lemma}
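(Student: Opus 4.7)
The plan is to start with the cycle $\Omega_L := C_1(F_L)$ given by Theorem~\ref{fw-lemma-1}, and then iteratively modify it to make it tight while keeping it inside $B_1(F_L)$; the construction of $\Omega_R$ is completely symmetric, so I will only describe the $L$-side.

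First, I would verify that $C_1(F_L)$ is a valid starting cycle: by Theorem~\ref{fw-lemma-1} it is surface-separating in $\Sigma'$, $C_1(F_L) \subseteq B_1(F_L)$, and $B_1(F_L) \subseteq \Int(C_1(F_L))$. Invoking Theorem~\ref{fw-lemma}, whose hypotheses are met since $G'$ is 3-connected with $\fw(G') \geq 3$, the ring of faces surrounding $F_L$ has the wheel-like structure that forces the topological region of $B_1(F_L)$ to be precisely the closed disk $\Int(C_1(F_L))$. Consequently the open sub-cylinder bounded by $F_L$ and $\Omega_L$ is contained in this disk, and every vertex or edge of $G'$ lying in this sub-cylinder belongs to $B_1(F_L)$.

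Next, the iterative shrinking step. If the current $\Omega_L$ fails to be tight with respect to $F_L$, then by definition there is an $(\Omega_L)$-path $P$ in the sub-cylinder bounded by $F_L$ and $\Omega_L$, disjoint from $F_L$; by the preceding observation, $P \subseteq B_1(F_L)$. Let $u, v$ be the ends of $P$, let $A_1, A_2$ be the two $(u,v)$-arcs of $\Omega_L$, and note that the simple arc $P$ splits the open sub-cylinder into a topological disk $D_0$ bounded by $P \cup A_1$ (say) and a strictly smaller sub-cylinder $A'$ bounded by $F_L$ and the cycle $\Omega_L' := A_2 \cup P$. Since $A_2 \cup P$ is the outer boundary circle of the annulus $A'$, it is a simple cycle homotopic to $F_L$ in $\Sigma'$; and $\Omega_L' \subseteq B_1(F_L)$ because $A_2 \subseteq \Omega_L \subseteq B_1(F_L)$ and $P \subseteq B_1(F_L)$. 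I would replace $\Omega_L$ by $\Omega_L'$ and repeat.

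To conclude, I would exhibit a termination measure. Let $m(\Omega)$ denote the number of edges of $G'$ that lie strictly inside the sub-cylinder bounded by $F_L$ and $\Omega$ (that is, not on $F_L$ and not on $\Omega$). By the definition of an $(\Omega_L)$-path, $P$ contains at least one edge not on $\Omega_L$; that edge lies in the open sub-cylinder (as $P$ is disjoint from $F_L$), so it contributes to $m(\Omega_L)$, yet it appears on $\Omega_L'$ and thus ceases to contribute to $m(\Omega_L')$, while no new interior edges are created. Therefore $m(\Omega_L') < m(\Omega_L)$, and the procedure terminates after finitely many iterations with the desired tight $\Omega_L \subseteq B_1(F_L)$. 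The main obstacle is justifying the topological identification of $\Int(C_1(F_L))$ with $B_1(F_L)$ used in the first paragraph, which is what guarantees that the chord $P$, and hence every modified cycle, remain within the prescribed first neighbourhood; this rests on the 3-connectivity of $G'$ and the inequality $\fw(G') \geq 3$, via Theorem~\ref{fw-lemma}.
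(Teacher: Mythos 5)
The paper itself offers no proof of this lemma (it is introduced only by ``By possibly altering $\Omega_R$ and $\Omega_L$, we may assume that the following holds''), and your iterative shrinking procedure with the edge-counting termination measure is exactly the alteration the authors intend; that part of your argument is correct, including the choice of which arc of $\Omega_L$ to keep and the verification that the quantity $m$ strictly decreases. The gap is in your first paragraph, where you assert that every vertex and edge of $G'$ lying in the sub-cylinder between $F_L$ and $C_1(F_L)$ belongs to $B_1(F_L)$, i.e.\ that $\Int(C_1(F_L))$ coincides with $B_1(F_L)$. Theorem \ref{fw-lemma-1} gives only the one inclusion $B_1(F_L)\subseteq \Int(C_1(F_L))$, and Theorem \ref{fw-lemma} does not yield the converse: two faces of $B_1(F_L)$ may meet at a vertex or edge away from $F_L$, enclosing a ``pocket'' inside the annulus whose interior contains faces, vertices and edges with no vertex on $F_L$ and hence not in $B_1(F_L)$. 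Neither 3-connectivity nor $\fw(G')\ge 3$ nor $\nsfw(G')\ge 4$ excludes such pockets. Since this identification is the only place where you establish that $P$, and hence each modified cycle, stays inside $B_1(F_L)$, the second assertion of the lemma is not actually proved.

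The repair is short and makes the questionable claim unnecessary: containment in $B_1(F_L)$ follows from tightness, so it need only be checked for the final cycle. Let $\Omega$ be the tight cycle your procedure terminates with, let $e\in E(\Omega)$, and let $g$ be the face incident with $e$ on the $F_L$-side. By Theorem \ref{fw-lemma}, $\partial g$ is a cycle; it cannot equal $\Omega$, since the region between $F_L$ and $\Omega$ is an annulus and not a single face, so some component of $\partial g\setminus \Omega$ closes up to an $(\Omega)$-path $P$ embedded in the sub-cylinder bounded by $F_L$ and $\Omega$. If $g$ had no vertex on $F_L$, then $P$ would also be disjoint from $F_L$ and would contradict the tightness of $\Omega$. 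Hence $g$ has a vertex on $F_L$, so $g\subseteq B_1(F_L)$ and therefore $e$, with its ends, lies in $B_1(F_L)$; as $e$ was arbitrary, $\Omega\subseteq B_1(F_L)$. With this substitution for your first paragraph the proof is complete and matches what the authors leave implicit.
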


Next we observe that
\begin{lemma}
$\Omega_R$ and $\Omega_L$ are disjoint.
\end{lemma}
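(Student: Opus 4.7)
I will argue by contradiction: assume $v\in V(\Omega_L)\cap V(\Omega_R)$, and extract from this a non-separating closed face-chain in $G$ of length strictly less than $n=|\Lambda|$, contradicting (H1)(ii). Since $v\in V(\Omega_L)\subseteq V(B_1(F_L))$, the definition of $B_1$ supplies a face $f_L$ of $G'$ with $v\in V(f_L)$ and a vertex $u_L\in V(f_L)\cap V(F_L)$ (allowing $f_L=F_L$ and $u_L=v$ when $v\in V(F_L)$); I choose $f_R,u_R$ analogously from $\Omega_R$ and $F_R$. Because $V(F_L)\cap V(F_R)=\emptyset$ in $G'$, we cannot simultaneously have $f_L=F_L$ and $f_R=F_R$, so at least one of $f_L,f_R$ is a face of $G$ outside $\Lambda$ (those are exactly the faces of $G'$ distinct from $F_L$ and $F_R$). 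In $G$, $u_L$ lies on $\bnd_L(F_i)$ for some $F_i\in\Lambda$ and $u_R$ lies on $\bnd_R(F_j)$ for some $F_j\in\Lambda$.

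This yields a face-chain $\Lambda'=u_L,f_L,v,f_R,u_R$ in $G$ of length $k\le 2$ from $F_i$ to $F_j$, using only faces outside $\Lambda$ (with the obvious degenerations when $f_L=F_L$, $f_R=F_R$, or $f_L=f_R$). Combining $\Lambda'$ with each of the two sub-chains $\Lambda_1,\Lambda_2$ of $\Lambda$ joining $F_i$ to $F_j$ (as in Theorem~\ref{maintool}) produces closed face-chains $\Lambda_1^*=\Lambda_1\cup\Lambda'$ and $\Lambda_2^*=\Lambda_2\cup\Lambda'$ of total length at most $n+2+2k\le n+6$. (If $F_i=F_j$, the degenerate combined chain $u_L,f_L,v,f_R,u_R,F_i,u_L$ already has length $\le 3$, yielding the contradiction directly.)

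The key step is to show that both $\Gamma(\Lambda_1^*)$ and $\Gamma(\Lambda_2^*)$ are surface non-separating. Since $\Gamma(\Lambda)$ is $2$-sided, the labels ``left'' and ``right'' are globally consistent along it, with every $\bnd_L(F_i)$ on the left and every $\bnd_R(F_j)$ on the right. The arcs of $\Lambda'$ lie in the faces $f_L,f_R\notin\Lambda$, so they avoid the interior of $\Gamma(\Lambda)$, and $\Lambda'$ moves from $u_L$ on the left side to $u_R$ on the right side without any transverse crossing. The return through a sub-chain of $\Lambda$ runs along $\Gamma(\Lambda)$'s direction, touching the intermediate vertices $x_k$ tangentially; after a small generic perturbation (push the arcs in the interior faces of the sub-chain entirely into the right half, route the arc in the first face along $\bnd_L$ and the arc in the last face along $\bnd_R$), exactly one transverse crossing with $\Gamma(\Lambda)$ survives, at the one $x_k$ where the perturbed curve must jump from the right side back to the left. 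Hence $[\Gamma(\Lambda_\ell^*)]$ pairs non-trivially with $[\Gamma(\Lambda)]\ne 0$ in $H_1(\Sigma;\mathbb{Z}/2)$, so $[\Gamma(\Lambda_\ell^*)]\ne 0$ and $\Gamma(\Lambda_\ell^*)$ is non-separating. By (H1)(ii), $|\Lambda_\ell^*|\ge n$ for $\ell=1,2$, so $2n\le|\Lambda_1^*|+|\Lambda_2^*|\le n+6$, forcing $n\le 6$ and contradicting $n\ge\nsfw(G)\ge 7$.

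The main obstacle will be the careful verification of the single-crossing count in the degenerate cases: when $u_L$ or $u_R$ coincides with a vertex $x_k$ on $\Gamma(\Lambda)$, the shortcut $\Lambda'$ begins or ends on $\Gamma(\Lambda)$ itself, and the local side-assignment forced by which of $x_k^L$ or $x_k^R$ appears in $V(f_L)$ or $V(f_R)$ must be checked to ensure that exactly one transverse crossing survives after perturbation.
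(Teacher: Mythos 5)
Your proposal is correct and follows essentially the same route as the paper: a common vertex $v$ of $\Omega_L$ and $\Omega_R$ yields, via the definition of $B_1(F_L)$ and $B_1(F_R)$, a length-$\le 2$ face-chain from the left side of $\Gamma(\Lambda)$ to its right side, and combining it with the two subchains of $\Lambda$ gives two closed face-chains, both surface non-separating, whose lengths sum to at most $n+6$, contradicting $\nsfw(G)\ge 7$. The only differences are presentational: the paper invokes Theorem~\ref{maintool} with $k=2$ and asserts the non-separating property in one line, while you redo the length count directly and justify non-separation via the mod-2 intersection pairing with $\Gamma(\Lambda)$.
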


\begin{proof} For suppose not, then we let $v \in V(\Omega_R\cap \Omega_L)$. By the definition of $B_1(F_R)$ and $B_1(F_L)$, $v$ is co-facial with some vertex of $F_R$, say $w_R$, and some vertex of $F_L$, say $w_L$. In $G$, the vertices $v,w_R,w_L$, define a face-chain $\Lambda'$ of length two (not closed), starting and ending in $\Lambda$, whose faces are distinct from the faces of $\Lambda$. Let $\Lambda_1$ and $\Lambda_2$ be the two face-subchains\footnote{Strictly speaking, $\Lambda_1$ and $\Lambda_2$ need not be subchains since $w_L$ and $w_R$ need not be constituents of $\Lambda$. But all other faces and vertices in $\Lambda_1$ and $\Lambda_2$ are taken from $\Lambda$.}
in $\Lambda$ with ends $w_L$ and $w_R$. As $\Gamma(\Lambda')$ connects the left side of $\Gamma (\Lambda)$ with its right side, we see that both $\Gamma(\Lambda' \cup \Lambda_1)$ and  $\Gamma(\Lambda' \cup \Lambda_2)$ are surface non-separating in $G$.

To obtain a contradiction, note that by Theorem~\ref{maintool} (with $k=2$), one of $\Lambda' \cup \Lambda_1$ and $\Lambda' \cup \Lambda_2$
is of length at most 6. Since both $\Gamma(\Lambda' \cup \Lambda_1)$ and  $\Gamma(\Lambda' \cup \Lambda_2)$ are surface non-separating in $G$, we have a contradiction to the assumption that $\nsfw(G) \geq 7$.
\end{proof}

In $G$, the cycles $\Omega_R$ and $\Omega_L$  are homotopic to $\Gamma(\Lambda)$ (and homotopic to each other). Therefore, there exists $\Sigma' \subseteq \Sigma$ such that $\Sigma'$ is homeomorphic to a cylinder, the cuffs of which coincide with $\Omega_R$ and $\Omega_L$, and $\Gamma(\Lambda)\subseteq \Sigma'$. Let $G(\Omega_L,\Omega_R) \subseteq G$, be the subgraph of $G$ embedded in $\Sigma'$ (including $\Omega_L$ and $\Omega_R$).

Let $\mathcal{Q}=\{Q_1,Q_2,\dots\}$ be a set of pairwise disjoint paths, such that each $Q_i$ is an  $(\Omega_L,\Omega_R)$-path, disjoint from $G(\Omega_L,\Omega_R)$ except for its ends. If $\mathcal{Q}$ is of  maximum cardinality, then we say that $\mathcal{Q}$ is an \emph{exterior} $(\Omega_L,\Omega_R)$-linkage. By Lemma \ref{lem:disjoint-paths}, $|\mathcal{Q}| \geq \nsfw(G) \geq 7$.

By two applications of Lemma~\ref{lem:cylinder} and using Lemma~\ref{tight}, we see that  $G(\Omega_L,\Omega_R)$ contains a set $P_0^R,\dots,P_{n-1}^R$ and $P_0^L,\dots,P_{n-1}^L$ of pairwise disjoint paths, satisfying the following properties:
\begin{enumerate}
\item[(1)] For $i=0,\dots, n-1$, $P_i^L$ (resp., $P_i^R$) has ends $x_i$ and $l_i \in \Omega_{L}$ (resp., $r_i \in \Omega_{R}$) and is otherwise disjoint from $\Omega_L$ ($\Omega_R$) and $X(\Lambda)$. For $i=0,\dots,n-1$, set $P_i: = P_i^L \cup P_i^R$. Note that $P_i$ is an $(l_i,r_i)$-path contained in $G(\Omega_L,\Omega_R)$. Also note that the vertices $r_0,r_1,\dots, r_{n-1}$ ($l_0,l_1,\dots, l_{n-1}$) appear on $\Omega_R$ ($\Omega_L$) in order.
\item[(2)] Let $X\in \{L,R\}$. For $i=0,\dots,n-1$, $P_i$ is disjoint from $\bnd_X(F_j)$, if $j\neq\{i-1,i\}$ (indices modulo $n$). In addition, we may assume that $P_i-x_i$ intersects at most one of $\bnd_X(F_i)$ and $\bnd_X(F_{i-1})$. If $P_i -x_i$ intersects $F_{i}$ we say that $F_i$ is the  $X$-\emph{support} of $P_i$, otherwise $F_{i-1}$ is the $X$\emph{-support} of $P_i$. 	
\end{enumerate}

A set $\mathcal{P}=\{P_0,\dots,P_{n-1}\}$ of paths satisfying properties (1) and (2) above, is called an \emph{internal} $(\Omega_L,\Omega_R)$\emph{-linkage}. Figure \ref{fig:internallinkage} shows part of an internal linkage and the corresponding notation as used in the sequel.

For $i=0,\dots,n-1$, let $\Omega_R(i)$ (resp., $\Omega_L(i)$) be the path on $\Omega_R$ (resp., $\Omega_L$) from $r_i$ to $r_{i+1}$ (resp., $l_i$ to $l_{i+1}$) not passing thorough $r_{i+2}$ (resp., $l_{i+2}$).

\begin{figure}[hbt]
    \begin{center}
     \scalebox{1.0}{\includegraphics{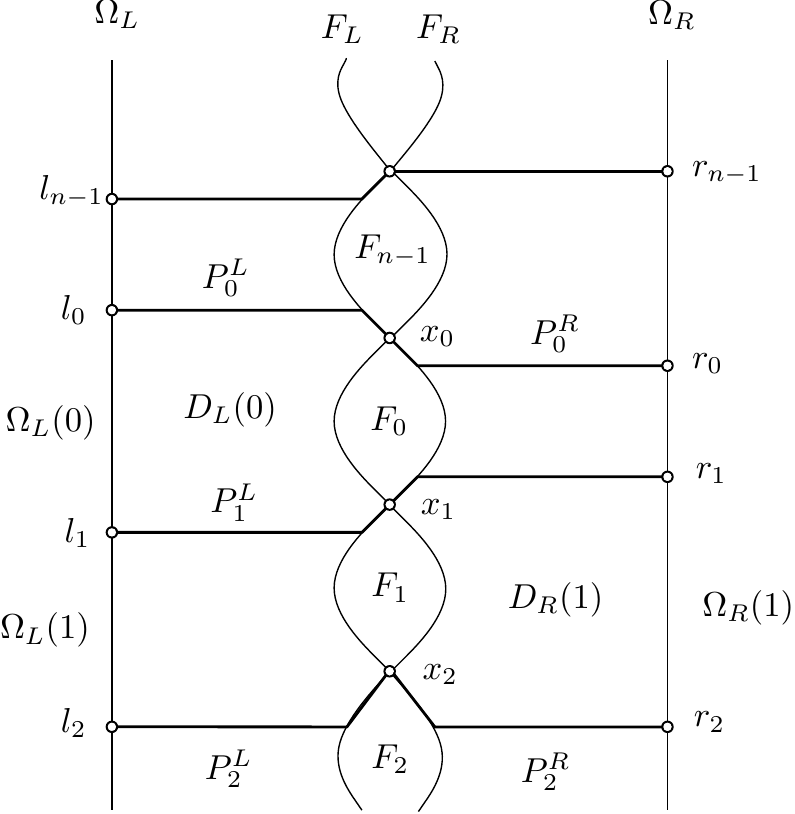}}
     \caption{Internal $(\Omega_L,\Omega_R)$-linkage}
     \label{fig:internallinkage}
     \end{center}
\end{figure}

Let $X \in \{R,L\}$. For a subset of indices $I=\{i_0,\dots,i_{|I|-1}\}\subseteq \{0,\dots,n-1\}$, let $\mathcal{F}_X(\mathcal{P},I) \subseteq F(\Lambda)$ be a set of consecutive faces of $\Lambda$ satisfying the following:
\begin{enumerate}
\item[(1)] For every vertex $v \in \cup_{i\in I} \Omega_X(i)$ there exists $f \in \mathcal{F}_X(\mathcal{P},I)$ such that $v$ is co-facial with some vertex of $f$.
\item[(2)] Subject to (1), $|\mathcal{F}_X(\mathcal{P},I)|$ is minimum.
\end{enumerate}
Observe that $\mathcal{F}_X(\mathcal{P},I)$ exists by Lemma~\ref{tight}.
Of special interest is the case when $I=\{i\}$ or $I=\{i,i+1\}$ for some $0 \leq i \leq n-1$. Note that $1 \leq |\mathcal{F}_X(\mathcal{P},\{i\})| \leq 3$ and $1 \leq |\mathcal{F}_X(\mathcal{P},\{i,i+1\})| \leq 4$.

For $i=0,\dots,n-1$ and $X\in \{R,L\}$, let $D_X(i)$ be the closed disk bounded by $\Omega_X(i)$, $P_i ^ X$, $P_{i+1} ^ X$ and a path in $C_X(\Lambda)$ on the boundary of the faces in $\mathcal{F}_X(\mathcal{P},\{i\})$.

The following is a direct consequence of the definition of $\mathcal{F}_X(\mathcal{P},\{i\})$.

\begin{lemma}
\label{setofsimpleclaims}
For $i=0,\dots,n-1$, each of $r_i$ and $r_{i+1}$ $($resp., $l_i$ and $l_{i+1})$ is co-facial in $G(\Omega_L,\Omega_R)$ with some vertex in $V(\mathcal{F}_R({\mathcal{P}},\{i\}))$ $($resp.,  $V(\mathcal{F}_L({\mathcal{P}},\{i\})))$.
\end{lemma}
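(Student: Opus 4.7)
The statement is essentially an unpacking of the definition of $\mathcal{F}_R(\mathcal{P},\{i\})$ and $\mathcal{F}_L(\mathcal{P},\{i\})$, so the plan is short and direct.

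\textbf{First}, I would observe that $r_i$ and $r_{i+1}$ are vertices of $\Omega_R(i)$. Indeed, by construction $\Omega_R(i)$ is the $(r_i,r_{i+1})$-subpath of $\Omega_R$ that avoids $r_{i+2}$, so both its endpoints belong to the vertex set $V(\Omega_R(i))$. Symmetrically, $l_i,l_{i+1}\in V(\Omega_L(i))$.

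\textbf{Second}, I would invoke property (1) of the defining conditions of $\mathcal{F}_X(\mathcal{P},I)$ with $X=R$ and $I=\{i\}$: every vertex $v\in V(\Omega_R(i))$ is co-facial with some vertex of some face $f\in \mathcal{F}_R(\mathcal{P},\{i\})$. Taking $v=r_i$ and $v=r_{i+1}$ in turn yields the required faces. The argument for $l_i,l_{i+1}$ is identical after replacing $R$ by $L$.

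\textbf{Third}, I would address the only subtlety: the defining condition of $\mathcal{F}_X(\mathcal{P},I)$ is stated in terms of co-faciality in the embedding on $\Sigma$, whereas the lemma asserts co-faciality in the subgraph $G(\Omega_L,\Omega_R)$. To reconcile these, I would note that any face of $G$ witnessing co-faciality between a vertex $v\in V(\Omega_R)$ and a vertex $w$ lying on the boundary of some $f\in F(\Lambda)\subseteq \mathcal{F}_R(\mathcal{P},\{i\})$ must lie in the annular strip between $\Omega_R$ and $C_R(\Lambda)$, which is contained in the sub-cylinder $\Sigma'$ bounded by $\Omega_L$ and $\Omega_R$. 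Since both $v$ and $w$ are vertices of $G(\Omega_L,\Omega_R)$, and since restricting to a subgraph only merges faces, this face of $G$ is contained in a face of $G(\Omega_L,\Omega_R)$ whose boundary still carries both $v$ and $w$. Hence the co-faciality passes from $G$ to $G(\Omega_L,\Omega_R)$.

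I do not anticipate any substantive obstacle: the lemma is a bookkeeping statement that simply records, for later use, what the definitions of $\Omega_R(i)$, $\Omega_L(i)$ and $\mathcal{F}_X(\mathcal{P},\{i\})$ already guarantee at their endpoints.
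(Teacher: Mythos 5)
Your proposal is correct and matches the paper, which gives no separate argument and simply records the lemma as a direct consequence of the definition of $\mathcal{F}_X(\mathcal{P},\{i\})$ applied to the endpoints $r_i,r_{i+1}$ of $\Omega_R(i)$ (resp.\ $l_i,l_{i+1}$ of $\Omega_L(i)$); your extra remark reconciling co-faciality in $G$ with co-faciality in $G(\Omega_L,\Omega_R)$ is a harmless and accurate piece of bookkeeping.
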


A \emph{system} is a pair $(\mathcal{Q},\mathcal{P})$, where $\mathcal{Q}$ is an exterior $(\Omega_L,\Omega_R)$-linkage and $\mathcal {P}$ is an interior $(\Omega_L,\Omega_R)$-linkage. For $X \in \{L,R\}$ and a subset  $\mathcal{A} \subseteq \mathcal{Q} \cup \mathcal{P}$ of paths, we denote by  $Ends(\mathcal{A},\Omega_X)$ the set of endvertices of the paths in $\mathcal{A}$ contained in $\Omega_X$.
If $A$ is a single path, we set $End(A,\Omega_X) = Ends(\{A\},\Omega_X)$.
The following is the key ingredient in the proof.

\begin{lemma}
\label{distance-2}
Let $k \geq n \geq 7$, and let $\Im=(\mathcal{Q},\mathcal{P})$ be a system, where $\mathcal{Q}=\{Q_0, \dots, Q_{k-1}\}$ and $\mathcal{P}=\{P_0,\dots,P_{n-1}\}$. Let $Z \in \{L,R\}$ and $Y\in\{L,R\} \setminus Z$. Then there exists a system $(\mathcal{Q'},\mathcal{P'})$, where $\mathcal{Q'}=\{Q'_0, \dots, Q'_{k-1} \}$ and $\mathcal{P'}=\{P'_0,\dots,P'_{n-1} \}$ satisfying the following:
\begin{enumerate}
	\item[\rm(P1)] For $i=0,\dots,n-1$, $P_i^{'Z} = P_i ^ Z$.
	\item[\rm(P2)]  For $i=0,\dots,{k-1}$, $End(Q_i,\Omega_Z)=End(Q_i',\Omega_Z)$.
	\item [\rm(P3)] There exist paths $A, B\in \mathcal{Q'}$, such that $End(A,\Omega_Y)$ and $End(B,\Omega_Y)$ are at distance at least two on $\Omega_Y$ with respect to $Ends(\mathcal{P'},\Omega_Y)$.
\end{enumerate}
\end{lemma}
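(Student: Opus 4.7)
The plan is to establish (P3) via a small number of local reroutings of the exterior linkage $\mathcal{Q}$, leaving the interior linkage $\mathcal{P}$ unchanged. Enumerate the vertices of $Ends(\mathcal{Q},\Omega_Y)$ in cyclic order around $\Omega_Y$ as $q_0,\dots,q_{k-1}$, and let $a_i$ be the number of vertices of $Ends(\mathcal{P},\Omega_Y)$ in the arc of $\Omega_Y$ from $q_i$ to $q_{i+1}$ that avoids the other $q_l$, so that $\sum_i a_i=n$. Failure of (P3) means that every cyclic consecutive sum $\sum_{l=i}^{j-1}a_l$ lies in $[0,1]\cup[n-1,n]$. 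Because $n\geq 7$, a short case analysis on the partial sums shows that this forces either some $a_{i_0}=n$ with all other $a_l=0$, or some $a_{i_0}=n-1$, some $a_{i_1}=1$, and all other $a_l=0$. In either case we may pick $w,w'\in V(\Omega_Y)$ so that one of the two $(w,w')$-arcs of $\Omega_Y$, call it $X$, contains every vertex of $Ends(\mathcal{Q},\Omega_Y)$ and at most one vertex of $Ends(\mathcal{P},\Omega_Y)$, whereas $\overline{X}$ contains at least $n-1\geq 6$ vertices of $Ends(\mathcal{P},\Omega_Y)$.

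\medskip

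Since $\mathcal{Q}$ is, by definition, a maximum set of disjoint $(\Omega_L,\Omega_R)$-paths in the exterior graph $G_0$, Lemma \ref{lem:disjoint-paths} gives $|\mathcal{Q}|\geq\nsfw(G)\geq 7$. Apply Theorem \ref{thm:disjoint-paths-2} to $G_0$ (in the role of $G_1$), with $\mathcal{Q}$ (in the role of $\mathcal{P}$), the cycle $\Omega_Y$, and the vertices $w,w'$ just produced. Outcome (a) yields an $(int(\overline{X}),\mathcal{Q})$-path $R$ in $G_0$ meeting $\mathcal{Q}$ at a vertex $y\in V(Q_j)$ and starting at $u'\in int(\overline{X})$. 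Replacing the portion of $Q_j$ from $y$ to its $\Omega_Y$-endpoint by $R$ yields a new exterior path $Q'_j$ with the same $\Omega_Z$-endpoint and with $\Omega_Y$-endpoint $u'$, so $\mathcal{Q}':=\{Q'_j\}\cup(\mathcal{Q}\setminus\{Q_j\})$ together with $\mathcal{P}':=\mathcal{P}$ satisfies (P1) and (P2). Choosing $w,w'$ at the boundary of the $\mathcal{P}$-cluster on $\Omega_Y$ forces $u'$ to be flanked on both arcs from $u'$ to the $\Omega_Y$-endpoint of a suitable second path in $\mathcal{Q}'$ by at least two vertices of $Ends(\mathcal{P}',\Omega_Y)$, giving (P3); in the borderline situation in which $u'$ lands at the very edge of $\overline{X}$, we repeat the rerouting, with the modified system and a relocated pair $w,w'$.

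\medskip

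The principal obstacle is outcome (b) of Theorem \ref{thm:disjoint-paths-2}, which delivers a face $f\in F(G_0)$ and vertices $v,u\in V(X)$ incident with $f$ such that the closed curve $\Gamma_0:=\gamma\cup X[v,u]$, with $\gamma\subseteq f^{\circ}$, is surface non-separating in $\Sigma$. The plan is to exclude outcome (b) by converting $\Gamma_0$ into a non-separating closed face-chain in $G$ of length strictly less than $n=|\Lambda|$, contradicting the minimality condition (H1)(ii). By Lemma \ref{tight} together with $\Omega_Y\subseteq B_1(F_Y)$, every vertex of $X$ is co-facial in $G$ with a vertex of the face $F_Y$ on the interior side of $\Omega_Y$, and hence with a vertex of $X(\Lambda)$. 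The path $X[v,u]$ then casts an interior shadow face-chain $\Lambda'$ in $G$ whose endpoints sit in $X(\Lambda)$; concatenating $\Lambda'$ with the single exterior face $f$ and with the shorter $\Lambda$-sub-chain connecting its endpoints, via Theorem \ref{maintool}, produces a non-separating closed face-chain $\Lambda^{\circ}$ whose associated curve is homotopic to $\Gamma_0$. The delicate part is to verify $|\Lambda^{\circ}|<n$; this uses the cleanness of $\Lambda$ together with the hypothesis $\nsfw(G)\geq 7$ to bound the length of the interior shadow of $X[v,u]$, and is the main technical step of the argument.
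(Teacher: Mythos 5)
Your setup (the counting argument showing that failure of (P3) clusters $Ends(\mathcal{Q},\Omega_Y)$ into a segment $S$ containing at most one vertex of $Ends(\mathcal{P},\Omega_Y)$, followed by an application of Theorem \ref{thm:disjoint-paths-2} to the exterior part with $\mathcal{Q}$ in the role of the maximum linkage) matches the paper's opening moves. The genuine gap is in your treatment of outcome (b), which you propose to \emph{exclude} by converting the non-separating curve $\gamma\cup X[v,u]$ into a non-separating closed face-chain of length $<n$. This cannot work in general: the ``interior shadow'' of $X[v,u]$ has length roughly $2$ plus the number of faces $F_i$ of $\Lambda$ that $X[v,u]$ spans, and since $X$ stretches over three sectors ($\Omega_Y(n-1)\cup\Omega_Y(0)\cup\Omega_Y(1)$ in the paper's normalization), the resulting closed face-chain can have length $7$ or more when $v$ and $u$ sit near the two extremes of $X$ --- no contradiction with $\nsfw(G)\ge 7$. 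Indeed, the paper runs your argument only in the contrapositive direction: it proves that \emph{every} face-chain in $G(\Omega_L,\Omega_R)$ joining $v$ and $u$ has length at least $6$ (its claim (1)), and deduces from this that $v\in V(\Omega_L(n-1))$ and $u\in V(\Omega_L(1))$ are forced to be far apart (claim (2)). Outcome (b) is therefore a live case, not a contradiction, and it is exactly where the real work lies. (A further wrinkle: Theorem \ref{maintool} only bounds the length of the \emph{separating} one of the two closed chains; it does not hand you a short non-separating chain, so even the mechanism you invoke does not produce what you need.)

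Once outcome (b) stands, the only remaining freedom the lemma grants is to reroute the $Y$-halves $P_i^Y$ of the \emph{interior} linkage --- precisely the freedom your proposal renounces by ``leaving $\mathcal{P}$ unchanged.'' The paper's proof spends its bulk here: using $3$-connectivity, Menger's theorem, tightness of $\Omega_Y$, and the co-facialities $g_v,g_u$ forced by (b), it finds reroutings of $P_0,P_1$ or $P_{n-1}$ into the disks $D_L(0),D_L(1)$ that move ends of $\mathcal{P}$ into the clustered segment, contradicting an extremal choice of the system (conditions (J1), (J2)). Your proposal has no counterpart to this case analysis. A secondary, smaller issue: in your outcome-(a) branch you ``repeat the rerouting'' when the new end lands at the edge of $\overline{X}$, but you supply no decreasing quantity guaranteeing termination; the paper's (J1)/(J2) extremal selection is what makes that induction well-founded.
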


\begin{proof}
Assume for a contradiction that the claim is false. By symmetry we may assume that $Z=R$ and $Y=L$.

For every system $(\mathcal{Q}', \mathcal{P}')$ satisfying (P1) and (P2), let $S \subseteq \Omega_L$ be a minimal segment on the cycle $\Omega_L$ such that $Ends(\mathcal{Q}', \Omega_L) \subseteq V(S)$ and $S$ contains in its interior at most one vertex from the set $Ends(\mathcal{P}', \Omega_L)$. Note that $S$ exists since otherwise (P3) would hold. Let $T \subseteq \Omega_L$ be the minimal segment on $\Omega_L$ containing $S$ such that the endpoints of $T$ are in $Ends(\mathcal{P}',\Omega_L)$. Among all systems $(\mathcal{Q}', \mathcal{P}')$, we choose one with the following properties:

\begin{itemize}
	\item [\rm(J0)] $(\mathcal{Q}', \mathcal{P}')$ satisfies (P1) and (P2).
	\item [\rm (J1)] The number of vertices of $Ends(\mathcal{P}',\Omega_L)$ contained in $T$ is as large as possible and $|V(T)\setminus V(S)|$ is minimum.
\end{itemize}

Such a choice of $(\mathcal{Q}', \mathcal{P}')$ is clearly possible as $(\mathcal{Q},\mathcal{P})$ satisfies (P1) and (P2). By symmetry, we may assume that $T=\Omega_L(0)$ or $T=\Omega_L(0) \cup \Omega_L(1)$ and that $|Ends(\mathcal{Q},\Omega_L) \cap V(\Omega_L(0))| \geq 4$. To simplify notation, we may also assume that $(\mathcal{Q}, \mathcal{P}) = (\mathcal{Q}', \mathcal{P}')$.

Set $X:=\Omega_L(n-1) \cup \Omega_L(0) \cup \Omega_L(1)$. Then $X \subseteq \Omega_L$ is an $(l_{n-1},l_2)$-path on $\Omega_L$ containing $l_0$. Let $\overline{X}$ be the other $(l_{n-1},l_2)$-path on $\Omega_L$ such that $X \cup \overline{X} = \Omega_L$ and $X$ and $\overline{X}$ are disjoint except for their ends. We may further assume that $\Im'$ is chosen so that

\begin{itemize}
	\item [(J2)] Subject to (J1), $|V(X)|$ is minimum.
\end{itemize}

Observe that by the maximality of the number of disjoint paths in $\mathcal{Q}$, there does not exist an $(int(\overline{X}),\Omega_R)$-path which is internally disjoint from $G(\Omega_L,\Omega_R)$. Let us apply Theorem \ref{thm:disjoint-paths-2} to $G$ with $\mathcal{Q}$, $X$ and its ends (that is, the endpoints of $X$ playing the role of $w$ and $w'$ in Theorem \ref{thm:disjoint-paths-2}). Note that outcome (b) of Theorem \ref{thm:disjoint-paths-2} is obtained.
Let $u,v \in V(X)$ and $f$ be as promised to exist by Theorem \ref{thm:disjoint-paths-2}(b). Without loss of generality, assume that $v$ is closer to $l_{n-1}$ than $u$ on $X$.

The property of $f$ stated in Theorem \ref{thm:disjoint-paths-2} implies the following.

\begin{itemize}
\item[(1)] Every face-chain in $G(\Omega_L,\Omega_R)$ with ends $v$ and $u$ has length at least\/ $6$.
\end{itemize}
To see this, suppose that $\Lambda'$ is a face-chain in $G(\Omega_L,\Omega_R)$ of length at most five with ends $u$ and $v$. Then $\Lambda' \cup f$ is a closed face-chain in $G$ of length at most six, and by the property of $f$, $\Gamma(\Lambda' \cup f)$ is surface non-separating; contradicting the assumption that $\nsfw(G) \geq 7$. This proves~(1).

Property (1) immediately implies that

\begin{itemize}
	\item [(2)] $v \in V(\Omega_L(n-1))$ and $u \in V(\Omega_L(1))$.
\end{itemize}

Next we claim the following:

\begin{itemize}
	\item [(3)] In $G(\Omega_L,\Omega_R)$ there exist face chains $g_v,g_u$ of length at most two such that $g_v$ has ends $v,l_0$, and $g_u$ has ends $l_1,u$. Moreover, if $g_v$ (resp., $g_u$) is of length two, then $v$ (resp., $u$) is co-facial with some vertex in the $L$-support of $P_0$ (resp., $P_1$).
	\end{itemize}
We will prove existence of $g_u$ (the proof for $g_v$ is exactly the same; in fact it is even easier since  no $Q \in \mathcal{Q}$ has an end in the interior of $\Omega_L(n-1)$).

Let $ j \in \{0,1\}$ so that $F_j$ is the $L$-support of $P_1$. Let  $w$ be the end of the path $P_1 \cap \bnd_L(F_j)$ of $P_1$ so that $w \neq x_1$, unless the path $P_1 \cap \bnd_L(F_j)$ is the single vertex $x_1$. It suffices to show that every vertex $x \in \Omega_L(l_1,u)$ is of degree two in $G(\Omega_R, \Omega_L)$ or has no neighbors in $G(\Omega_R, \Omega_L)$ except for $w$ and the two neighbors of $x$ on the cycle $\Omega_L$. (Note that $x$ is adjacent in $G(\Omega_L, \Omega_R)$ with only two vertices in $\Omega_L$, since $\Omega_L$ is tight.)

Suppose to the contrary that there exists $x \in \Omega_L(l_1,u)$ such that $x$ has a neighbor in $G(\Omega_L,\Omega_R)$ that is distinct from $w$ and from the two neighbors of $x$ on the cycle $\Omega_L$.
Since $G$ is 3-connected (and hence $G-w$ is 2-connected), by Menger's theorem there exists a $(\partial D_L(1))$-path $P$ with ends  $x$ and $y$, where $y\in V(\partial D_L(1)) \setminus \{w\}$. Since $\Omega_L$ is tight, we have $y\in V(F_L)$.

\textbf{Case 1.} Suppose $j=1$. By Lemma~\ref{tight}, $y \in V(F_1 \cup F_2) \setminus \{w\}$.
%\partialto some vertex, say $y$, embedded in the closed disk $D_1$. By~\ref{tight}, $P$ is disjoint from $(P_1 \downarrow L) - (P_1 \cap F_1)$, $(P_2 \downarrow L) - (P_2 \cap (F_1 \cup F_2))$ and from $\Omega_L[l_1,l_2]$ except for its end $x$. In particular, $P$ is disjoint from $P_1$ (it is disjoint from $w$ by definition of $P$) and $y \in F_1 \cup F_2$.
Let $P_2'$ be the path obtained from $P_2$ by rerouting $P_2 ^ L$ so that it passes via $P$. Let $\mathcal{P'}$ be the new collection of paths. We claim that $\mathcal{P'}$ contradicts our choice of $\{\mathcal{Q},\mathcal{P}\}$.

If $Ends(\mathcal{Q},\Omega_L(l_1,l_2]) = \emptyset$, then $\{\mathcal{Q},\mathcal{P'}\}$ contradicts  (J2). Suppose that $Ends(\mathcal{Q},\Omega_L(x,u]) \allowbreak \neq \emptyset$ for some $Q \in \mathcal{Q}$, and let $Q'$ be a path in $\mathcal{Q}$ with an end in $\Omega_L(l_0,l_1)$. Such a path exists since $|Ends(\mathcal{Q},\Omega_L(0))| \geq 3$ and hence $|Ends(\mathcal{Q},\Omega_L[l_0,l_1)| \geq 1$. Then $Q$ and $Q'$ are at distance at least two with respect to $Ends(\mathcal{P'},\Omega_L$) and hence $\{\mathcal{Q}, \mathcal{P}'\}$ satisfies (P3).
It follows that $Ends(\mathcal{Q},\Omega_L) \subseteq \Omega_L(0) \cup \Omega_L[l_1,x]$,  but then $\{\mathcal{Q}, \mathcal{P}' \}$ contradicts (J1).

\textbf{Case 2.} $j=0$. By Lemma~\ref{tight}, $y \in V(F_0 \cup F_1 \cup F_2) \setminus  \{w\}$. We first observe  that $u$ is not co-facial in $G(\Omega_L,\Omega_R)$ with any vertex in $V(F_0)$. For suppose $u$ is co-facial in $G(\Omega_L,\Omega_R)$ with $F_0$, say via a face $g_0$. By Lemma~\ref{setofsimpleclaims}, $v$ is co-facial in $G(\Omega_L,\Omega_R)$, say via a face $g_1$, with some vertex of $V(\bnd_L(F_i))$, for some $i\in \{n-2,n-1,0\}$. Then using $g_0,g_1,F_{n-2},F_{n-1}$ and $F_0$ we can construct a face-chain of length at most five in $G(\Omega_L,\Omega_R)$ with ends $u$ and $v$, contradicting~(1).

If $y \in V(F_1 \cup F_2)\setminus\{x_1\}$, the proof proceeds exactly as in Case~1 (by replacing $P_2^L$ by another path using $P'$ and thus obtaining a contradiction).
Hence, we may assume that $y \in V(F_0) \setminus \{w\}$.
Since $u$ is not co-facial with any vertex in $V(F_0)$, Lemma~\ref{tight} implies that there exists a path $P'$ in $D_L(1)$ with one end in $V(F_1 \cup F_2)  \setminus \{x_1\}$ and the other end in $V((P-F_0) \cup \Omega_L[x,u))$. We then see that there exists a  path $P''$ with one end in $V(\Omega_L[x,u))$ and one end in $V(F_1 \cup F_2) \setminus \{x_1\}$, and the proof again proceeds exactly as in Case~1 with $P''$ playing the role of $P$. This proves (3).

\medskip

Observe that at least one of $P_0$ and $P_1$ is not $L$-supported by $F_0$. For if both $P_0$ and $P_1$ are $L$-supported by $F_0$, then by Lemma \ref{setofsimpleclaims}, each of $l_0$ and $l_1$ is co-facial  with some vertex in $V(F_0)$. By (3), there is a face-chain of length at most two from $v$ (resp., $u$) to a vertex in $V(F_0)$, since by (3), $v$ is either co-facial with $l_0$ (resp., $u$ is co-facial with $l_1$) in $G(\Omega_L,\Omega_R)$ or co-facial with a vertex in $V(F_0)$. Combining these faces together with $F_0$, we obtain a face-chain from $v$ to $u$ in $G(\Omega_L,\Omega_R)$ that is of length at most 5, contradicting~(1).

We will assume henceforth that $P_1$ is $L$-supported by $F_1$ (if $P_1$ is $L$-supported by $F_0$, the proof follows the same arguments). Now we distinguish two cases: either $P_0$ is $L$-supported by $F_0$ or by $F_{n-1}$. We consider the latter case (the former case is proved by the same arguments).

By~(1) and (3), $l_0$ and $l_1$ are not co-facial in $G(\Omega_R,\Omega_L)$. Hence there exists $x \in V(\Omega_L(l_0,l_1))$ of degree at least three in $G(\Omega_R,\Omega_L)$. By Menger's theorem and since $\Omega_L$ is tight, there is $(\partial D_L(0))$-path $P$ in $D_L(0)$ with ends $x$ and $y$, where $y \in \partial D_L(0) \cap( F_{n-1} \cup F_0 \cup F_1)$.

\textbf{Case 1.} Suppose that $y \in V(F_0) \setminus \{x_0,x_1\}$. Let $P_0'$ (resp., $P_1'$) be obtained from $P_0$ (resp., $P_1$) by re-rerouting it so that it passes via $P$ rather via $P_0^L$ (resp., $P_1^L$).
Let $\mathcal{P'}= (\mathcal{P} \setminus \{P_0\}) \cup \{P_0'\}$ and let $\mathcal {P''} = (\mathcal{P} \setminus \{P_1\}) \cup \{P_1'\}$. We claim that one of  $(\mathcal{Q},\mathcal{P'})$ or $(\mathcal{Q},\mathcal{P''})$ contradicts our choice of $(\mathcal{Q},\mathcal{P})$. We argue as follows.

We may assume that  $Ends(\mathcal{Q},\Omega_L[l_0,x)) \neq \emptyset$, for otherwise  $(\mathcal{Q},\mathcal{P'})$ contradicts (J1). Further, we may assume that $Ends(\mathcal{Q},\Omega_L(l_1,l_2]) =\emptyset$, for otherwise $(\mathcal{Q},\mathcal{P'})$ satisfies (P3) (since $Ends(\mathcal{Q},\Omega_L[l_0,x)) \neq \emptyset$). Now it is easy to see that $(\mathcal{Q},\mathcal{P''})$ contradicts one of the two conditions stated in (J1).

\textbf{Case 2.} Suppose  that $y \in V(F_1 \cup F_{n-1})$. We may assume that $y \in V(F_1)$ (if $y \in V(F_{n-1})$ the proof follows by the same arguments). Let  $w$ be the end of the path $P_0 \cap \bnd_L(F_{n-1})$ so that $w \neq x_0$ unless $V(P_0 \cap \bnd_L(F_{n-1})) = \{x_0\}$.  We proceed according to two cases, depending on whether $l_0$ is co-facial with a vertex in $V(F_1)$ or not.

\textbf{Case 2.1.} If $l_0$ is not co-facial with a vertex in $V(F_1)$, there exists a path $R$ in $D_L(0)$ with one end in $V(\Omega_L(l_0,x])$ and the other in $V(F_{n-1} \cup (F_0 -x_1))$ (since $\Omega_L$ is tight). Let $P_0'$ (resp., $P_1'$) be obtained from $P_0$ (resp., $P_1$) by re-rerouting it so that it passes via $R$ (resp., $P$) rather than via $P_0^L$ (resp., $P_1^L$). The proof now proceed exactly as in Case (1).

\textbf{Case 2.2.} Suppose that $l_0$ is co-facial with some vertex in $V(F_1)$ via a face $f_0$.  By (1) and (3), $g_v$ must be a face-chain of length two. By (the proof of) (3), $v$ and $w$ are co-facial in $G(\Omega_L,\Omega_R)$ and there exists a vertex $z \in \Omega_L(v,l_0)$ such that $zw \in E(G(\Omega_L,\Omega_R))$. Hence by (1), $w$ is not co-facial with a vertex in $V(F_1)$. Thus, there exists a path $R$ in $D_L(0)$ with one end in $V(F_{n-1} \cup (F_0 -x_1))$ and the other end, say $a$, in $V(P_0^L - V(F_{n-1}))$. Note that the path $R$ cannot end up in $\Omega_L(0)\setminus\{l_0\}$ because of the face $f_0$.

Let $P_0'$ (resp., $P_{n-1}'$) be obtained from $P_0$ (resp., $P_{n-1}$) by re-rerouting it so that it passes via $R$ (resp., the edge $wz$) rather than via $P_0^L$ (resp., $P_{n-1}^L$). Let $\mathcal{P}'$ be the new collection of paths obtained by replacing $P_0$ and $P_{n-1}$ with $P_0'$ and $P_{n-1}'$. Then $(\mathcal{Q}, \mathcal{P}')$ contradicts (J2). This completes the proof of Lemma~\ref{distance-2}.
\end{proof}

Now we can complete the proof of Theorem \ref{thm:main} when $\Lambda$ is 2-sided. Let $(\mathcal{Q},\mathcal{P})$ be a system. By two applications of Lemma~\ref{distance-2}, we may assume that $(\mathcal{Q},\mathcal{P})$ satisfies (P3)  for $X \in \{L,R\}$.

Let $A,B \in \mathcal{Q}$, such that $End(A,\Omega_L)$ and $End(B,\Omega_L)$ are at distance at least two with respect to $Ends(\mathcal{P},\Omega_L)$. Set $q_A=End(A,\Omega_R)$ and $q_B=End(B,\Omega_R)$.
If $q_A$ and $q_B$ are at distance at least one on $\Omega_R$ with respect to $Ends(\mathcal{P},\Omega_R)$, set $H=G(\Omega_L,\Omega_R) \cup A \cup B$. Otherwise, $q_A,q_B \in V(\Omega_R(i))$ for some $0 \leq i \leq n-1$. By~(P3), there exists a path $C \in \mathcal{Q}$ such that $End(C,\Omega_R) \not\in V(\Omega_R(i))$. Set  $H=G(\Omega_L,\Omega_R) \cup A \cup B \cup C$.

For each $i\in \{0,\dots,n-1\}$, $P_i$ intersect each of $\Omega_L$ and $\Omega_R$ in a single vertex. In addition, by definition, $P_i' = P_i\cap G(\Lambda)$ is a sub-path of $P_i$ with $x_i \in V(P_i')$.
Let $H_1$ be obtained  from $H$ by contracting the path $P_i'$ into the vertex $x_i$, for $i=0,\dots,n-1$.
By Theorem \ref{k6-on-c}, $H_1$ contains a $K_6$ minor, and hence also $G$. This completes the proof.

\subsection{Proof of Theorem \ref{thm:main} when $\Lambda$ is 1-sided}

Since the deletion of any vertex decreases the non-separating face-width at most by 1, we may assume that $\nsfw(G)=7$. By Theorem \ref{nsfw and large fw}, we may also assume that $G$ is 3-connected and that $\fw(G)\ge6$. Moreover, we shall assume throughout this subsection that $\Lambda$ is 1-sided.
Let $G'$ be the embedded graph obtained from $G$ by cutting the surface along $\Gamma(\Lambda)$ and capping off the resulting cuff with a disk $F$. In $G'$, every vertex $x_i \in X(\Lambda)$ ($i=0,1,\dots,6$) is split into two copies, $x_i'$ and $x_i''$, and the vertices $x_0',x_1',\dots,x_6',x_0'',x_1'',\dots,x_6''$ appear on the boundary of $F$ in the listed cyclic order.
By Theorem \ref{thm:pp} we may assume that $\Sigma$ is not the projective plane, thus the resulting surface $\Sigma'$ is not the sphere.
By Theorem~\ref{newfacewidth}, $\nsfw(G') \geq \left\lceil \tfrac{1}{2}\nsfw(G) \right\rceil \geq 4$ and $\fw(G')\ge3$.

Let us first consider the possibility that $\fw(G')=3$. Let $\Gamma'$ be the corresponding non-contractible curve. Clearly, $\Gamma'$ involves the face $F$ and two other faces $A,B$ that are also faces of $G$ in $\Sigma$. Let $x,F,y,B,z,A,x$ be the corresponding closed face-chain in $\Sigma'$. The curve $\Gamma'$ is surface-separating on $\Sigma'$ since $\nsfw(G')\ge4$. We can view $\Gamma'$ as a simple closed curve $\Gamma_0$ in $\Sigma$ by replacing the part of $\Gamma'$ in $F$ by a segment of $\Gamma(\Lambda)$. We can do this in two ways, so we may take a segment of $\Gamma(\Lambda)$ such that $\Gamma_0$ intersects at most 3 vertices in $X(\Lambda)$ that are different from $x$ and $y$. Since $\nsfw(G)\ge7$, $\Gamma_0$ is surface-separating in $\Sigma$ (possibly contractible) and it
separates the surface into two non-spherical surfaces $\Sigma_1$ and $\Sigma_2$. One of them, say $\Sigma_1$, contains a 1-sided curve corresponding to $\Lambda$. The part of $\Gamma'$ disjoint from the interior of the face $F$ can be combined in $\Sigma$ with two segments contained in $\Lambda$ to give two closed curves. One of them is $\Gamma_0$, and we call the other one $\Gamma_1$. Observe that $\Gamma_1$ is homologous to $\Lambda$ (thus 1-sided) and $\Gamma_0$ is surface-separating in $\Sigma$. Since $\nsfw(G)\ge7$ and $\fw(G)\ge6$, $\Gamma_1$ necessarily passes through four consecutive vertices in $X(\Lambda)$, and $\Gamma_0$ passes through the remaining three vertices in $X(\Lambda)$. We may assume that $\Gamma_0$ passes through $x_0,x_1,x_2$ and through the vertices $x,y,z$. One particular observation is that $x,y\notin X(\Lambda)$ and that the face-chain of $\Gamma_0$ is $x,A,z,B,y,F_2,x_2,F_1,x_1,F_0,x_0,F_6,x$ (see Figure \ref{fig:Gamma2prime}).

\begin{figure}[hbt]
    \begin{center}
     \scalebox{1.1}{\includegraphics{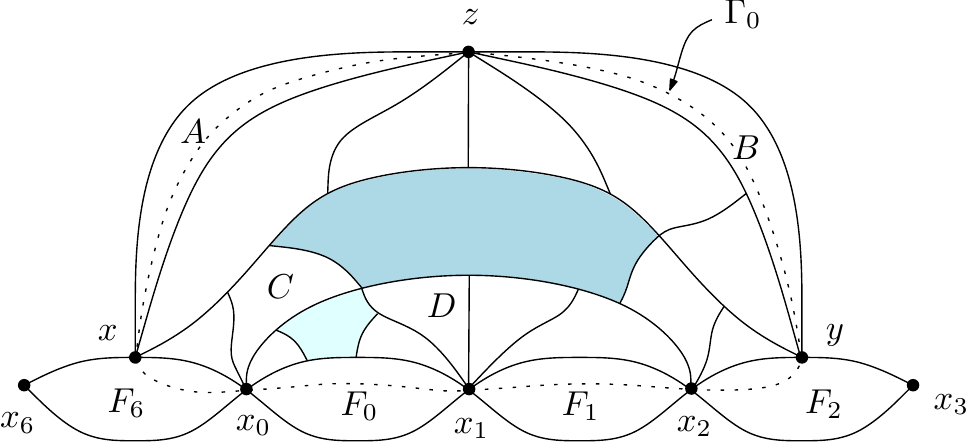}}
     \caption{The surface $\Sigma_2'$}
     \label{fig:Gamma2prime}
     \end{center}
\end{figure}

Let us consider the set $\F$ of faces of $G$ that lie inside $\Sigma_0$ and are incident with vertices in $X(\Gamma_0)$. Each such face is incident with precisely one vertex in $X(\Gamma_0)$. If not, we would either contradict 3-connectivity of $G$ or the fact that $\fw(G)\ge6$. If a face $Q\in\F$ is incident with $t\in X(\Gamma_0)$, we say that $Q$ is a $t$-face, and we let $\F_t$ denote the set of all $t$-faces in $\F$.

We say that two distinct vertices $s,t\in X(\Gamma_0)$ are at \emph{distance $d$\/} if their minimum face-distance in the closed face-chain of $\Gamma_0$ is equal to $d$. Note that $d\in\{1,2,3\}$.

Suppose that $s,t\in X(\Gamma_0)$ are at distance 3 and that $A\in\F_s$, $B\in\F_t$.
If $A$ and $B$ have a vertex $v$ in common, then the face-chain $s,A,v,B,t$ and the two face-subchains of $\Gamma_0$ give rise to two closed face-chains in $\Sigma$ of length 5, so they determine contractible closed walks. The 3-path-property implies that $\Gamma_0$ is also contractible. This contradiction shows that $A\cap B=\emptyset$.

If $s,t\in X(\Gamma_0)$ are at distance 2 and $A\in\F_s, B\in\F_t$ have a vertex $v$ in common, then we similarly see that one of the face-chains in $\Sigma$ obtained in the same way as above is of length 4, the other one of length 6. The first one determines a contractible curve in $\Sigma$. We can re-route $\Gamma_0$ through $v$, thus making $\Sigma_2'$ smaller. By repeating this process as long as necessary, we may assume that faces in $\F_s$ and $\F_t$ are disjoint whenever $s$ and $t$ are at distance 2.

If $s$ and $t$ are at distance 1 and two faces, $C\in\F_s$ and $D\in\F_t$, have a vertex $v$ in common (e.g.\ the faces $C,D$ depicted in Figure \ref{fig:Gamma2prime}), then there is a face-chain of length 3 through $s,t,v$ and the two faces. The corrersponding closed curve $\Gamma$ in $\Sigma$ is contractible, and we add all faces in the interior of $\Gamma$ into $\F$. After doing this for all possible choices of $s,t,C,D$, we define $\partial\F$ as the set of edges that belong to precisely one face in $\F$ and do not belong to any of the faces of $\Gamma_0$. The properties stated in the preceding paragraphs imply that $\partial \F$ is a simple cycle in $G$ that is homotopic to $\Gamma_0$. (In Figure \ref{fig:Gamma2prime}, this cycle is represented as the boundary of the darker shaded area. All faces in the lighter shaded area belong to $\F$ and form a disk in $\Sigma$.) Now we delete all edges and vertices in $\Sigma_2$ that do not belong to any of the faces in $\F$ and cap off the cycle $\partial \F$ by pasting a disk onto it. This gives rise to a subgraph $G_1$ of $G$ embedded into the capped surface $\Sigma_1$. It is easy to see by using the 3-path-property that $\nsfw(G_1)\ge7$ since every surface non-separating face-chain through the disk of $\partial \F$ can be rerouted to use the face-chain of $\Gamma_0$ without increasing its length.
Since the genus decreases by the reduction from $\Sigma$ to $\Sigma_1$, such a reduction can be made only a finite number of times, eventually yielding a case where $\fw(G')\ge4$.

From now on, we shall assume that $\fw(G')\ge 4$.
Let us apply Theorem \ref{fw-lemma-1} to the embedding of $G'$ in $\Sigma'$ and the face $F$. For $i=0,1$, let $C_i=C_i(F)$ be the cycle as in Theorem \ref{fw-lemma-1}. Since $\fw(G')\ge4$, these two cycles are contractible in $\Sigma'$.

The boundary of $F$ is a cycle in $G'$. In $G$, it corresponds to a closed walk which intersects itself transversally when passing through the vertices in $X(\Lambda)$, but it does not cross itself on the surface. In this sense we view $C_0$ as a closed walk in $G$. Theorem \ref{fw-lemma-1} assures that $C_0$ is homotopic to $C_1$.

Consider the cycle $C_1$ in $G$. Cutting $G$ along $C_1$ separates $\Sigma$ into two components, one of which contains $\Lambda$ and $C_0$. This surface is homeomorphic to the M\"obius strip. By capping off the cuff (pasting a disk onto $C_1$), we obtain a graph embedded into the projective plane $\Sigma_1$. We denote by $F_1$ the face in $\Sigma_1$ bounded by the cycle $C_1$. We also denote by $\Sigma_2$ the other bordered surface obtained after cutting $\Sigma$ along $C_1$.

Let $v\in V(C_1)$. Since $G$ is 3-connected and the embedding of $G$ in $\Sigma$ has face-width more than 3, the facial neighborhood of $v$ forms a disk on the surface that is bounded by a cycle $N_v$. This cycle contains a path $P_v$ whose ends $x,y$ are on $C_1$ but all edges and other vertices on this path lie in $\Sigma_2\setminus C_1$. Moreover, $P_v$ can be selected so that the cycle $Q_v$ consisting of $P_v$ and the $(x,y)$-segment of $C_1$ containing $v$ is contractible in $\Sigma_2$, and the interior of $Q_v$ contains all faces that are incident with $v$ and are contained in $\Sigma_2$. (The proof of this fact is essentially the same as the main argument in the proof of Theorem \ref{fw-lemma-1}; cf.\ \cite{Mo92} or \cite{mohar}.) If $u,v\in V(C_1)$ and the ends of the paths $P_v$ and $P_u$ interlace on $C_1$, contractibility of the cycles $Q_v$ and $Q_u$ implies that $P_v$ and $P_u$ intersect. This property has the following consequence. Let $H'$ be the minor of $G\cap \Sigma_2$ obtained from $P = \cup_{v\in V(C_1)} P_v \cup C_1$ by contracting all edges in $P$ whose both ends are outside $C_1$. Then $H'$ consists of $C_1$ together with some chords of $C_1$ and some vertices whose all neighbors lie on $C_1$. The interlacing property stated earlier implies that $H'$ can be drawn in the disk so that $C_1$ is on the boundary of the disk. By inserting this disk into the face $F_1$ in $\Sigma_1$ we obtain a minor $G''$ of $G$ that is embedded into the projective plane. It is easy to see that the face-width of $G''$ is at least 4, and by Theorem \ref{thm:pp}, $G''$ contains $K_6$ as a minor. Since $G''$ is a minor of $G$, we conclude that $G$ has $K_6$ minor. This completes the proof of Theorem \ref{thm:main}.

\section*{Acknowledgement}We are grateful to Lino Demasi who provided us with the list of all triangle-free graphs in the $Y\!\Delta$-class of the projective $4\times 4$ grid.

\bibliographystyle{abbrv}
\bibliography{refs}

\end{document}